\let\oldtocsection=\tocsection
\let\oldtocsubsection=\tocsubsection
\renewcommand{\tocsection}[2]{\hspace{0em}\oldtocsection{#1}{#2}}
\renewcommand{\tocsubsection}[2]{\hspace{1em}\oldtocsubsection{#1}{#2}}
\newtheorem{thm}{Theorem}[section]
\newtheorem{lemma}[thm]{Lemma}
\newtheorem{cor}[thm]{Corollary}
\newtheorem{prop}{Proposition}[section]
\theoremstyle{remark}
\newtheorem{rem}[thm]{Remark}
\theoremstyle{remark}
\newtheorem{example}[thm]{Example}
\theoremstyle{definition}
\newtheorem{THM}{Theorem}
\theoremstyle{definition}
\theoremstyle{definition}
\theoremstyle{definition}
\newtheorem{definition}[thm]{Definition}
\numberwithin{equation}{section}
\newcommand{\C}{\mathbb{C}}           % Use for complex numbers.
\newcommand{\Z}{\mathbb{ Z}}           % Use for integers.
\newcommand{\rank}{\operatorname{rank}}
\newcommand{\diag}{\operatorname{diag}}
\newcommand{\rk}{\operatorname{rk}}
\newcommand{\GL}{\operatorname{GL}}
\newcommand{\cc}{\mathcal{C}}
 \newcommand{\ci}{\mathcal{I}}
 \newcommand{\cj}{\mathcal{J}}
 \newcommand{\ck}{\mathcal{K}}
 \newcommand{\cl}{\mathcal{L}}
 \newcommand{\cx}{\mathcal{X}}
\renewcommand{\tilde}{\widetilde}
\newcommand{\cy}{\mathcal{Y}}
\newcommand{\op}{\mathrm{op}}
\newcommand{\Spec}{\mathrm{Spec}}
\newcommand{\rad}{\mathrm{rad}}
\newcommand{\Br}{\mathrm{Br}}
\newcommand{\semi}{\mathsf{s}}
\newcommand{\nilp}{\mathsf{n}}
\newcommand{\mx}{\mathsf{x}}
\begin{document}

%%%%%%%%%%%%%%%%%%%%%%%%%%%%%%%%%%%%%%%%%%%%%%%%%%%%
%%%%%%%%%%%%%%%%%%%%%%%%%%%%%%%%%%%%%%%%%%%%%%%%%%%%
%%%%%%%%%%%%%%%%%%%%%%%%%%%%%%%%\end{center}%%%%%%%%%%%%%%%%%%%%
\title[Minimal Semisimple Hessenberg Schemes]{Minimal Semisimple Hessenberg Schemes}

\author{Rebecca Goldin}
\address{Department of Mathematical Sciences\\ George Mason University\\ 4400 University Drive\\ Fairfax, VA\\ 22030\\ USA}
\email{rgoldin@gmu.edu }

\author{Martha Precup}
\address{Department of Mathematics\\ Washington University in St. Louis \\ One Brookings Drive\\ St. Louis, Missouri\\ 63130\\ USA  }
\email{martha.precup@wustl.edu}

\date{\today}

\begin{abstract} We study a collection of Hessenberg varieties in the type A flag variety associated to a nonzero semisimple matrix whose conjugacy class has minimal dimension.  We prove each such minimal semisimple Hessenberg variety is a union Richardson varieties and compute this set of Richardson varieties explicitly. Our methods leverage the notion of matrix Hessenberg schemes to answer questions about the geometry of minimal semisimple Hessenberg varieties using commutative algebra and known results on Schubert determinantal ideals. In particular, we show that all type A minimal semisimple matrix Hessenberg schemes are reduced.
\end{abstract}

\maketitle

%%%Sec 1: Introduction%%%%%%%%%%%%%%%
%%%%%%%%%%%%%%%%%%%%%%%%%%%%%%%%
%%%%%%%%%%%%%%%%%%%%%%%%%%%%%%%%

\section{Introduction}
Let $G:=GL_n(\C)$ be the set of $n\times n$ invertible matrices and $B$ be the subgroup of upper triangular matrices in $G$. A Hessenberg variety $Y_{\mx, h}$ is a subvariety of the flag variety $G/B$ defined by a choice of matrix $\mx\in \mathfrak{gl}_n(\C) = \mathrm{Lie}(G)$ and Hessenberg function~$h$.
In this paper, we consider Hessenberg varieties in the flag variety $G/B$ for $\mx$ a \emph{minimal semisimple element}, that is, $\mx$ is a non-scalar diagonalizable matrix such that the dimension of its conjugacy class in $\mathfrak{gl}_n(\C)$ is as small as possible. In general, we say that any nonzero matrix $\mx$ is \emph{minimal} if its conjugacy class satisfies this property. If $\mx$ is a minimal element, then we say that $Y_{\mx, h}$ is a \textit{minimal Hessenberg variety}.   

Consider the minimal nilpotent element $\nilp=E_{1n}\in \mathfrak{gl}_n(\C)$, where $E_{1n}$ is the elementary matrix with unique nonzero entry equal to $1$ in row $1$ and column $n$.  Conjugation by $B$ scales $\nilp$ so $Y_{\nilp ,h}$ is $B$-invariant, implying that $Y_{\nilp ,h}$ is a union of Schubert varieties.
The minimal nilpotent Hessenberg varieties $Y_{\nilp, h}$ were studied by Tymoczko \cite{Tymoczko2006A} and by Abe and Crooks \cite{Abe-Crooks2016}, where they identify 
components of $Y_{\nilp, h}$ explicitly. Here we characterize $Y_{\mx, h}$ in the minimal semisimple case, and show that they are unions of Richardson varieties determined by combinatorial data associated with the Hessenberg function $h$. We 
describe their defining ideals, allowing us to prove they are reduced as schemes. In a companion paper~\cite{GP-families} we prove that these Hessenberg schemes lie in flat families over lines in the union of all minimal orbits, and determine the (generally non-reduced) scheme structure on minimal nilpotent Hessenberg varieties as well.

Throughout this paper, we identify $G/B$ with the collection of complete flags in $\C^n$,
$$
V_\bullet = 0\subset V_1\subset V_2\subset \cdots \subset V_{n-1}\subset V_n=\C^n
$$
with $\dim_\C V_i =i$ for all $1\leq i\leq n$.  Denote by $B_-$ the subgroup of lower triangular matrices. Then $T:=B\cap B_-$ is a maximal torus, consisting of diagonal matrices in $G$. We identify the Weyl group of $G$ with the permutation group $S_n$ on $n$ letters. Given $w\in S_n$ we let $\dot w$ denote the permutation matrix with entry equal to $1$ in row $i$ and column $j$ if $w(j)=i$.  Our convention is that the \emph{Schubert variety} $X_w=\overline{B \dot wB/B}$ is the closure of the left $B$-orbit of $\dot wB$ in $G/B$, while the \emph{opposite Schubert variety} $X_w^{\op} = \overline{B_-\dot wB/B}$ is the closure the $B_-$-orbit of $\dot wB$.

Hessenberg varieties are parameterized by a nondecreasing function 
$$
h: \{1,2, \dots, n\}\rightarrow \{1,2,\dots, n\}
$$ 
satisfying $h(i)\geq i$, called a \emph{Hessenberg function}, and an $n\times n$ matrix 
$\mx\in \mathfrak{gl}_n(\C)$.  Explicitly,
\begin{eqnarray}\label{eqn.hessdef}
Y_{\mx, h} := \{V_\bullet \mid \mx(V_i)\subseteq V_{h(i)} \}
\end{eqnarray}
is the \emph{Hessenberg variety} corresponding to the matrix $\mx$ and Hessenberg function $h$.
If $\mx$ is a nilpotent (respectively, semisimple) element of $\mathfrak{gl}_n(\C)$ then we say that $Y_{\mx, h}$ is a \emph{nilpotent} (respectively, \emph{semisimple}) \emph{Hessenberg variety}.

We employ the term \emph{variety} for $Y_{\mx, h}$ even though its defining ideal may not be reduced. The literature uses the term ``Hessenberg variety" both to refer to the underlying reduced variety and to the scheme corresponding to the defining ideal of~\eqref{eqn.hessdef}. Our results include a description of $Y_{\mx,h}$ as a scheme; we make this explicit in Section~\ref{sec.Hess.schemes}, where we lift $Y_{\mx,h}$ to a subscheme of $M_n(\C)$, the affine scheme of $n\times n$ complex matrices.

In this article we characterize minimal semisimple Hessenberg varieties $Y_{\mx, h}$.  We carry out this program by working ``upstairs" in the matrix group $G$, realized as an open subscheme of $M_n(\C)$.
Following Insko--Tymoczko--Woo~\cite{Insko-Tymoczko-Woo} we introduce the \emph{matrix Hessenberg scheme},  a right $B$-invariant subscheme $\cy_{\mx, h}$ of $M_n(\C)$ equal to the closure of $\pi^{-1}(Y_{\mx, h})\subseteq G$, where $\pi: G\rightarrow G/B$ is the quotient map.

We show in Sections~\ref{sec.Hess.schemes} and~\ref{sec.proofs} that, for all $h$, it suffices to consider  the matrix Hessenberg scheme defined using the minimal semisimple element
\[
\semi:= \mathrm{diag}(1,0,\ldots, 0) \in \mathfrak{gl}_n(\C).
\]
In Theorem~\ref{thm.ss.components}, we 
identify all components of $\cy_{\semi, h}$ and prove that it is reduced for all Hessenberg functions $h$. We prove that the irreducible components of $\cy_{\semi, h}$ are in bijection with the \emph{corners} $\cc(h)$ of $h$, the set of $i\in [n]$ such that $h(i)>h(i-1)$ where, by convention, $h(0)=0$. 
As a consequence,
we obtain a corresponding 
decomposition of the Hessenberg variety $Y_{\semi, h}$ into a reduced union of Richardson varieties.

\begin{THM}\label{thm1} For all Hessenberg functions $h$, the minimal semisimple Hessenberg variety $Y_{\semi, h}$ is equal to a reduced union of Richardson varieties,
\[
Y_{\semi, h} = \bigcup_{i\in \cc(h)} \left( X_{u[i]}^{\op} \cap X_{w_0 v[h(i)]} \right)
\]
where $u[i]\in S_n$ is shortest permutation $u$ such that $u(i)=1$ and $v[j]\in S_n$ is the shortest permutation $v$ such that $v(j) = n$. 
\end{THM}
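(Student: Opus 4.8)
The plan is to deduce Theorem~\ref{thm1} from the analysis of the matrix Hessenberg scheme $\cy_{\semi,h}$ carried out in Theorem~\ref{thm.ss.components}, by pushing the component decomposition down along the quotient $\pi\colon G\to G/B$. First I would record the reduction, promised in the introduction and to be proved in Sections~\ref{sec.Hess.schemes}--\ref{sec.proofs}, that it suffices to work with the single minimal semisimple element $\semi=\mathrm{diag}(1,0,\dots,0)$: any minimal semisimple $\mx$ is, after conjugation and an affine rescaling that does not change the flag variety condition $\mx(V_i)\subseteq V_{h(i)}$, of the form $a\,\semi+cI$ with $a\neq 0$, and $Y_{a\semi+cI,h}=Y_{\semi,h}$. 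So the theorem reduces to identifying $Y_{\semi,h}$.

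Next I would invoke Theorem~\ref{thm.ss.components}: $\cy_{\semi,h}$ is reduced, and its irreducible components are indexed by the corners $\cc(h)$. The core step is to identify, for each $i\in\cc(h)$, the image in $G/B$ of the corresponding matrix component. Concretely, the defining condition for $Y_{\semi,h}$ is $\semi(V_i)\subseteq V_{h(i)}$ for all $i$; since $\semi$ projects $\C^n$ onto the line $\C e_1$ along $\mathrm{span}(e_2,\dots,e_n)$, the condition $\semi(V_i)\subseteq V_{h(i)}$ is automatic when $e_1\notin V_{h(i)}$ is false-vacuous in the other direction: it says either $V_i\subseteq \ker\semi=\mathrm{span}(e_2,\dots,e_n)$ or $e_1\in V_{h(i)}$. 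Because $h$ is nondecreasing and $V_\bullet$ is a flag, one checks that $V_\bullet\in Y_{\semi,h}$ iff for every corner $i$, letting $j=h(i)$, we have $V_i\subseteq \mathrm{span}(e_2,\dots,e_n)$ or $e_1\in V_j$. I would then translate each such condition into a pair of (opposite) Schubert conditions: $e_1\in V_j$ is the opposite Schubert condition cutting out $X_{w_0v[h(i)]}$ (the locus where the full flag meets $\C e_1=\langle e_n\rangle$-type subspace appropriately), while $V_i\subseteq\langle e_2,\dots,e_n\rangle$ is the Schubert-type condition defining $X_{u[i]}^{\op}$, where $u[i]$ is the shortest permutation with $u(i)=1$ and $v[j]$ the shortest with $v(j)=n$ — matching the combinatorics of which coordinate hyperplane or coordinate subspace is forced. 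The union over corners then gives $Y_{\semi,h}=\bigcup_{i\in\cc(h)}\big(X_{u[i]}^{\op}\cap X_{w_0v[h(i)]}\big)$, and reducedness of each piece as a Richardson variety together with reducedness of $\cy_{\semi,h}$ (hence of its pushforward) yields that the union is reduced.

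To make the identification of components precise rather than just set-theoretic, I would match the matrix components of $\cy_{\semi,h}$ from Theorem~\ref{thm.ss.components} with the preimages $\pi^{-1}$ of the Richardson varieties: a Richardson variety $X_w^{\op}\cap X_v$ is the image of an irreducible $B$-invariant subscheme of $G$, and its matrix closure in $M_n(\C)$ is a Schubert-determinantal-type scheme whose ideal was identified in the proof of Theorem~\ref{thm.ss.components}; comparing generators (the vanishing of a single coordinate-type minor coming from the corner $i$, i.e.\ the $e_1$-row entry of $V_i$) shows the matrix component attached to corner $i$ is exactly the matrix Hessenberg scheme attached to the Richardson variety $X_{u[i]}^{\op}\cap X_{w_0v[h(i)]}$. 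The main obstacle I anticipate is precisely this bookkeeping: verifying that the ``shortest permutation'' descriptions $u[i]$ (with $u(i)=1$) and $v[h(i)]$ (with $v(h(i))=n$) are the correct minimal-length representatives, so that the Richardson variety has the expected dimension and the intersection $X_{u[i]}^{\op}\cap X_{w_0v[h(i)]}$ is nonempty and irreducible of the right codimension — this requires checking $u[i]\le w_0v[h(i)]$ in Bruhat order for each corner, which follows from $h(i)\ge i$ but should be spelled out.
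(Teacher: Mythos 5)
Your overall architecture --- reduce to $\semi=\diag(1,0,\ldots,0)$, quote Theorem~\ref{thm.ss.components} for the matrix Hessenberg scheme, and descend along $\pi\colon G\to G/B$ --- is exactly the paper's route (Theorem~\ref{thm1} is Corollary~\ref{cor.ss.variety}, obtained from Theorem~\ref{thm.ss.components} via Remark~\ref{rem.quotient}, Proposition~\ref{prop.Schubert}(2) and Lemma~\ref{lem.Schubert4}). But note that all of the real content of Theorem~\ref{thm1}, in particular the word ``reduced,'' lives in the ideal identity $\tilde{\ci}_{\semi,h}=\ci_{\semi,h}=\bigcap_{i\in\cc(h)}(\cl_{i-1}+\ck_{h(i)})$ proved there; if you are allowed to cite that statement wholesale, the theorem follows immediately and your ``core step'' is redundant, while if you are not, your proposal never attempts the commutative algebra (radicality of $\cl_i\cdot\ck_{h(i)}$, primality of $\cl_{i-1}+\ck_{h(i)}$, the induction over corners, and the localization step) that actually carries the reducedness claim. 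Set-theoretic matching of components plus ``reducedness of the pushforward'' does not substitute for it.

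Moreover, the one step you add independently is wrong as stated. The claimed equivalence ``$V_\bullet\in Y_{\semi,h}$ iff for every corner $i$, $V_i\subseteq\langle e_2,\ldots,e_n\rangle$ or $e_1\in V_{h(i)}$'' fails: for $n=4$, $h=(2,2,4,4)$ (corners $1,3$), the flag with $V_1=\langle e_2\rangle$, $V_2=\langle e_2,e_1+e_3\rangle$ satisfies both corner conditions but violates $\semi(V_2)\subseteq V_2$. The reduction to corners must be taken at $i^*=\max\{k\mid h(k)=h(i)\}$ (cf.\ Lemma~\ref{lem.corners}), and the correct set-theoretic statement is existential: $V_\bullet\in Y_{\semi,h}$ iff there is a corner $i$ with $V_{i-1}\subseteq\langle e_2,\ldots,e_n\rangle$ and $e_1\in V_{h(i)}$. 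Relatedly, your Schubert identification is off by one: by Lemma~\ref{lem.Schubert}, $\{V_\bullet\mid V_{i}\subseteq\langle e_2,\ldots,e_n\rangle\}$ is $X^{\op}_{u[i+1]}$, not $X^{\op}_{u[i]}$, so the components your derivation actually produces are $\{V_i\subseteq\langle e_2,\ldots,e_n\rangle\}\cap\{e_1\in V_{h(i)}\}$, whose union is strictly smaller than $Y_{\semi,h}$ (for $h=(2,2,4,4)$ it misses the standard coordinate flag, which lies in the component $X_{w_0v[2]}=\{e_1\in V_2\}$ of Example~\ref{ex.semisimple}). The labels in your conclusion match the theorem only because the two off-by-one slips cancel in the notation, not in the sets; this is precisely the bookkeeping you flagged, and it does break the argument as written.
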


While we show the minimal semisimple Hessenberg variety $Y_{\semi, h}$ is reduced for all $h$, it may not be equidimensional 
(see Corollary~\ref{cor.equidim}). 
The top-dimensional components are detected by its cohomology class in $H^*(G/B)$, which we describe as a function of $h$ in Corollary~\ref{cor.cohomclass}.
In a companion paper, we show that the minimal semisimple and nilpotent matrix Hessenberg schemes lie in a flat family \cite{GP-families}. This provides a unified explanation for surprising similarities between minimal semisimple and minimal nilpotent Hessenberg varieties.

\subsection{Acknowledgments} 
The first author was supported by National Science Foundation grant \#2152312 and the second author was supported by grant \#1954001 and \#2237057. We thank Rahul Singh for several helpful conversations.

%%%Sec 2: Background%%%%%%%%%%%%%%%
%%%%%%%%%%%%%%%%%%%%%%%%%%%%%%%%
%%%%%%%%%%%%%%%%%%%%%%%%%%%%%%%%

\section{Background}\label{sec.background}

This section contains the definitions and basic properties of Schubert determinantal ideals that will be used below. Many of the results in this section were originally proved by Fulton~\cite{Fulton} and also Knutson and Miller~\cite{Knutson-Miller2005}; Chapter 15 of the text~\cite{MS05} is an excellent expository reference. 

Let $n$ be a positive integer and $[n]:=\{1,2,\ldots, n\}$. As in the introduction, $G:=GL_n(\C)$ is the algebraic group of $n\times n$ invertible complex matrices with Lie algebra $\mathfrak{gl}_n(\C)$.  For any $n\times n$ matrix $A$, let $A_{p\times q}$ denote the submatrix of the top $p$ rows and left-most $q$ columns, i.e.~the upper $p\times q$ northwest submatrix of $A$. 

%%%%%%%%%%%%%%%%%

Let $w\in S_n$.
For any $p,q$ with $1\leq p,q\leq n$, the rank of $\dot{w}_{p\times q}$ is given by
\[
r_{pq}(w) := \rank(\dot{w}_{p\times q})= \left| \{w(1), \ldots, w(q)\} \cap \{1, \ldots, p\} \right|.
\]  
Let $\mathbf{z}:= \{z_{ij} \mid (i,j) \in [n]\times [n]\}$ denote $n^2$ variables, and  $Z = (z_{ij})_{1\leq i, j \leq n}\in M_n(\C[\mathbf{z}])$ be the matrix with variable $z_{ij}$ in the $ij$th entry of the matrix.
The \emph{Schubert determinantal ideal $\ci_w$ corresponding to the permutation $w$} is the ideal in $\C[\mathbf{z}]$ generated by all minors of $Z_{p\times q}$ of size $1+r_{pq}(w)$.\footnote{ \cite{MS05} and \cite{Knutson-Miller2005} use the transpose matrix $\dot{w}^{T}$ rather than $\dot{w}$, resulting in a different ideal $\cj_w$. Transposition on $M_n(\C)$  induces an isomorphism on the coordinate ring that sends $\cj_w$ to $\ci_w$. Geometrically, transposition descends to an isomorphism from $B_-\backslash G$ to $G/B$.}

For each $w\in S_n$,  the \emph{matrix Schubert variety} is given by $\cx_w:= \overline{B\dot wB}$ in $M_n(\C) = \Spec(\C[\mathbf{z}])$.  Similarly, the \emph{opposite matrix Schubert variety} is $\cx_w^{\op}:= \overline{B_-\dot wB}$.\footnote{The varieties $\cx_w^{\op}$ are called `matrix Schubert varieties'  
by some authors.}
We will frequently use the relationship that
\begin{equation}\label{eq:oppositetoregularSchubert}
w_0 \cx_w^{\op}  = \cx_{w_0w},
\end{equation}
where $w_0$ is the longest word in $S_n$. The following is well-known, see~\cite[Chap.~15]{MS05}.

\begin{prop}\label{prop.Schubert} Let $w\in S_n$ and $\ci_w$ denote the corresponding Schubert determinantal ideal. Recall that $\ell(w):= |\{(i,j) \mid 1\leq i<j\leq n, w(i)>w(j) \}|$ is the \emph{length} of $w$.
\begin{enumerate}
\item The opposite matrix Schubert variety $\cx_w^{\op}$ is the reduced subscheme $\Spec (\C[\mathbf{z}]/\ci_w)$; in particular, Schubert determinantal ideals are prime. %}
\item  Let $\pi: G\rightarrow G/B$ be the natural projection. Then $\cx_w^{\op} \cap G = \pi^{-1}(X^{\op}_w)$.
\item  The dimension of the opposite Schubert variety $\cx_w^{\op}$ is
$\dim \cx_w^{\op} = n^2 - \ell(w)$.
\end{enumerate}
\end{prop}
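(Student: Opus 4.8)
The plan is to treat the three statements in the order (1), (2), (3), with the set-theoretic description and the dimension count being essentially formal consequences of the orbit picture, and the scheme-theoretic content of (1) — that the ideal of minors is already radical — carrying all the real weight. First I would record the set-theoretic skeleton. Since $\cx_w^{\op}=\overline{B_-\dot w B}$ is the closure of the image of the irreducible variety $B_-\times B$ under the multiplication map $(L,R)\mapsto L\dot w R$, it is irreducible. For the rank description, note that left multiplication by a lower-triangular matrix replaces the top $p$ rows of a matrix by vectors in their span, and right multiplication by an upper-triangular matrix replaces the leftmost $q$ columns by vectors in their span; hence $M\mapsto\rank M_{p\times q}$ is non-increasing along $B_-\dot w B$, and since it equals $r_{pq}(w)$ at $\dot w$, every $M\in\cx_w^{\op}$ satisfies $\rank M_{p\times q}\le r_{pq}(w)$ because these are closed conditions. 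The reverse containment — that the locus cut out by all these rank inequalities lies in $\cx_w^{\op}$ — is Fulton's, obtained by checking that this locus is irreducible of the expected dimension $n^2-\ell(w)$ with $B_-\dot w B$ as a dense orbit. Thus $\cx_w^{\op}=\{M\in M_n(\C):\rank M_{p\times q}\le r_{pq}(w)\ \text{for all }p,q\}$ as a set, and in particular $\sqrt{\ci_w}$ is the prime ideal of the irreducible variety $\cx_w^{\op}$.

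It then remains to upgrade this to the scheme level, i.e.\ to show $\ci_w$ is itself radical. Here I would invoke the Gr\"obner geometry of Knutson--Miller: with respect to an antidiagonal term order, the generating minors of $\ci_w$ form a Gr\"obner basis of $\ci_w$, and the initial ideal $\mathrm{in}(\ci_w)$ is the Stanley--Reisner ideal of a subword complex, which is shellable and in particular squarefree. A squarefree monomial ideal is radical, and radicality passes from $\mathrm{in}(\ci_w)$ back to $\ci_w$ under the Gr\"obner degeneration; combined with the irreducibility established above, $\ci_w$ is prime and $\cx_w^{\op}=\Spec(\C[\mathbf{z}]/\ci_w)$ is reduced. (Fulton's original route, handling the vexillary case via classical generic determinantal varieties and then reducing the general case to it, is an alternative.)

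Statement (2) is formal: $\pi\colon G\to G/B$ is a Zariski-locally-trivial $B$-bundle, so over a trivializing open set it is a projection $U\times B\to U$, and taking preimages commutes with closure for such projections; hence $\pi^{-1}(\overline Z)=\overline{\pi^{-1}(Z)}$ for any $Z\subseteq G/B$. Applying this to $Z=B_-\dot w B/B$, and using that $B_-\dot w B$ is already right $B$-stable so that $\pi^{-1}(Z)=B_-\dot w B$, gives $\pi^{-1}(X_w^{\op})=\overline{B_-\dot w B}^{\,G}$; since $G$ is open in $M_n(\C)$, this equals $\overline{B_-\dot w B}^{\,M_n(\C)}\cap G=\cx_w^{\op}\cap G$. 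For (3), equation~\eqref{eq:oppositetoregularSchubert} shows that left multiplication by $w_0$ is an invertible linear automorphism of $M_n(\C)$ carrying $\cx_w^{\op}$ isomorphically onto $\cx_{w_0w}=\overline{B\dot u B}$ with $u=w_0w$; then $\dim\overline{B\dot uB}=2\dim B-\dim(B\cap\dot uB\dot u^{-1})$ via the multiplication map $B\times B\to B\dot uB$, and using the standard formula $\dim(B\cap\dot uB\dot u^{-1})=\binom{n+1}{2}-\ell(u)$ yields $\dim\cx_w^{\op}=\binom{n+1}{2}+\ell(w_0w)=n^2-\ell(w)$.

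The one place where genuine input is required is the radicality of $\ci_w$ in part (1): the irreducibility, the rank description, the fiber statement (2), and the dimension count (3) are all soft manipulations with orbits and projections, whereas proving that the ideal of minors \emph{coincides} with its radical — rather than merely cutting out the correct zero locus — rests on the antidiagonal Gr\"obner basis and subword-complex technology (or on Fulton's determinantal reductions). Once that is granted, the remainder of the proposition assembles with no further effort.
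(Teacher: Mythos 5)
Your proposal is correct, and it coincides with what the paper itself relies on: the paper gives no proof of this proposition, citing it as well-known from Fulton and Knutson--Miller via \cite[Chap.~15]{MS05}, and your outline --- the rank-condition description of $\overline{B_-\dot w B}$ with the hard containment and radicality/primeness deferred to Fulton's dimension argument and the Knutson--Miller Gr\"obner basis/subword-complex theorem, the locally trivial $B$-bundle argument for (2), and the orbit--stabilizer dimension count for (3) --- is precisely the standard argument from those sources. The only cosmetic point is the term-order convention (the paper, using the transposed matrix $\dot w$, phrases its Lemma~\ref{lem.intersection} with diagonal term orders, harmless there because only maximal minors appear), which does not affect the validity of your sketch.
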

By Equation~\eqref{eq:oppositetoregularSchubert} and statement (1) above, we find that 
\begin{eqnarray}\label{eqn.Schubertscheme}
\cx_{w_0w}=\Spec(\C[\mathbf{z}]/  w_0\cdot \ci_w)
\end{eqnarray}
where $w_0\cdot \ci_w$ is the image of the Schubert determinantal ideal $\ci_w$ under the action of $w_0$ on $\C[\mathbf{z}]$ defined on variables $z_{ij}$ by $w_0\cdot z_{ij} = z_{w_0(i)j}$.

Inclusions of matrix Schubert varieties (and also Schubert varieties) define a partial order $\leq_{\Br}$ on the set of permutations known as \emph{Bruhat order}.  That is, we have $v\leq_\Br w$ if and only if $\cx_v \subseteq \cx_w$. Consequently, $v\leq_\Br w$ if and only if $\cx^\op_{v}\supseteq \cx^\op_{w}$.

Particular Schubert and opposite Schubert varieties play a role in characterizing minimal semisimple Hessenberg schemes.  
For each $k\in [n]$, let
\begin{itemize}
\item $u[k]$ be the shortest permutation $u$ in $S_n$ such that $u(k)=1$ and
\item $v[k]$ be the shortest permutation $v$ in $S_n$ such that $v(k)=n$.
\end{itemize}
It is easy in practice to write down the one-line notation for each such permutation.  Indeed, we obtain the one-line notation for $u[k]$ by placing $1$ in the $k$-th position and filling all remaining entries with $2, \ldots, n$ in increasing order from left to right.  Similarly, we obtain the one-line notation for $v[k]$ by placing $n$ in the $k$-th position and filling all remaining entries with $1, \ldots, n-1$ in increasing order from left to right.  For example, when $n=5$, $u[2] = [2, 1, 3, 4, 5]$ and $v[3] = [1, 2, 5, 3, 4]$. Note that $u[1] = v[n] = e$ is the identity.

We now introduce the particular ideals of interest in this paper. For each $i\in [n-1]$, set
\begin{equation}\label{eq:LiKi}
\begin{aligned}
\cl_i&:= \left< z_{11}, z_{12}, \ldots, z_{1i} \right>, \quad \mbox{and}\\
\ck_i &:= \left<p_B \mid B\subseteq \{2, \ldots n\}, |B|=i \right>
\end{aligned}
\end{equation}
where $p_B$ is the $i\times i$ minor of  $Z = (z_{ij})_{1\leq i, j \leq n}$ given by $\det(z_{ k\ell})_{k\in B, 1\leq \ell\leq i}$.
Define $\mathcal L_0 := \left< 0 \right>$ and $\ck_n:=\left< 0 \right>$.

The following  lemma is straightforward to verify using the definition of a matrix Schubert variety and the observation of Equation~\eqref{eqn.Schubertscheme}.

\begin{lemma}\label{lem.Schubert} For all $i\in [n-1]$, $\cl_i$ is the Schubert determinantal ideal for the permutation $u[i+1]$ and $w_0\cdot\ck_i$ is the Schubert determinantal ideal for the permutation $v[i]$. Explicitly,
\[
\cx_{u[i+1]}^{\op} = \Spec(\C[\mathbf{z}]/\cl_i) \quad\mbox{and} \quad\cx_{w_0v[i]} = \Spec(\C[\mathbf{z}]/\ck_i).
\]
\end{lemma}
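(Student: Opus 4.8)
The plan is to verify both identifications directly from the rank-function definition of Schubert determinantal ideals, starting with $\cl_i$. First I would compute the one-line notation of $u[i+1]$: by the recipe given just before the lemma, $u[i+1]$ places $1$ in position $i+1$ and fills positions $1,\ldots,i,i+2,\ldots,n$ with $2,3,\ldots,n$ in increasing order, so $u[i+1] = [2,3,\ldots,i+1,1,i+2,\ldots,n]$. Now I would compute the essential rank conditions. The key observation is that $\{u[i+1](1),\ldots,u[i+1](q)\}\cap\{1\}$ is empty for $q\le i$ and equals $\{1\}$ for $q\ge i+1$; hence $r_{1q}(u[i+1]) = 0$ for $1\le q\le i$ and $r_{1q}(u[i+1])=1$ for $q>i$. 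Thus the size-$(1+r_{1q})$ minors of $Z_{1\times q}$ are exactly the single entries $z_{11},\ldots,z_{1i}$ (for $q\le i$), which generate $\cl_i$. It remains to check that all other rank conditions $r_{pq}(u[i+1])$ for $p\ge 2$ impose no further constraints, i.e.\ that the size-$(1+r_{pq})$ minors of $Z_{p\times q}$ already lie in $\cl_i$ or are trivial; this follows because for $p\ge 2$ the first $p$ rows of $\dot u[i+1]$ already contain $p$ of the values, making $r_{pq}$ as large as possible given the column count once $q$ is large, and for small $q$ the generic minors vanish automatically. Invoking Proposition 1.3(1) then gives $\cx^{\op}_{u[i+1]} = \Spec(\C[\mathbf z]/\cl_i)$.

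For the second statement I would first identify $v[i]$: its one-line notation is $[1,2,\ldots,i-1,n,i,i+1,\ldots,n-1]$, so $v[i]$ sends column $i$ to row $n$. Then $w_0 v[i]$ sends column $i$ to row $1$, and its one-line notation is $[n,n-1,\ldots,n-i+2,1,n-i+1,\ldots,2]$ — reading off, $w_0 v[i](\ell) = n-\ell+1$ for $\ell<i$, $w_0v[i](i)=1$, and $w_0v[i](\ell)=n-\ell+2$ for $\ell>i$. Now I would compute $r_{pq}(w_0v[i])$ and check the essential minors. The relevant conditions occur at the "corner" governing the submatrices $Z_{p\times i}$: one finds $r_{p,i}(w_0 v[i]) = i-1$ for all $p < n$ (the first $i$ columns of $\dot{w_0v[i]}$ hit rows $\{1\}\cup\{n-i+2,\ldots,n\}$, of which $i-1$ lie in $\{1,\ldots,p\}$ as long as $p<n$... here I would be careful about the exact ranges), so the generating minors are the $i\times i$ minors of $Z_{p\times i}$ for the maximal relevant $p$, namely rows $2,\ldots,n$ (row $1$ is excluded since $w_0v[i](i)=1$ forces... — this is precisely where the index bookkeeping must be done honestly). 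These are exactly the minors $p_B$ with $B\subseteq\{2,\ldots,n\}$, $|B|=i$, generating $\ck_i$. One then checks the remaining rank conditions are vacuous, applies Proposition 1.3(1) together with Equation (1.4) (i.e.\ $\cx_{w_0 w} = \Spec(\C[\mathbf z]/w_0\cdot\ci_w)$) to conclude $\cx_{w_0 v[i]} = \Spec(\C[\mathbf z]/\ck_i)$, and notes the degenerate cases $i=0$ (where $\cl_0 = \langle 0\rangle$ matches $u[1]=e$) and $i=n$ (where $\ck_n=\langle 0\rangle$ matches $v[n]=e$, since $w_0 v[n] = w_0$ has full rank everywhere... actually $w_0 \cdot \ci_{w_0} $, need $\ck_n$ to define $\cx_{w_0 e}=\cx_{w_0}$ — wait, this needs a small sanity check against conventions).

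The main obstacle I anticipate is purely bookkeeping: getting the ranks $r_{pq}$ exactly right under the paper's specific conventions (in particular the choice of $\dot w$ versus $\dot w^T$ flagged in the footnote, and the direction of the $w_0$-twist on variables, $w_0\cdot z_{ij}=z_{w_0(i)j}$), and then confirming that the naive set of generators coming from the "essential set" of the permutation coincides on the nose with the listed generators $\{z_{11},\ldots,z_{1i}\}$ and $\{p_B\}$ rather than a superset. No deep idea is needed — the content is entirely in Proposition 1.3, Lemma 1.4's analogue for $\ci_w$, and Equation (1.4); the lemma is "straightforward to verify" as the paper says, but the verification is unforgiving about off-by-one errors, so I would organize it by first pinning down one-line notations, then rank matrices, then the essential set, and only then matching generators.
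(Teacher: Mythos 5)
Your verification of the first identity is essentially the intended straightforward check and is correct in outline, with one slip in the justification: for $2\le p\le q\le i$ one has $r_{pq}(u[i+1])=p-1$, so the required size-$p$ minors of $Z_{p\times q}$ do \emph{not} ``vanish automatically''; they are honest minors, and the point is that each uses row $1$ and only columns in $\{1,\dots,i\}$, so Laplace expansion along the first row exhibits it as a combination of $z_{11},\dots,z_{1i}$, hence it lies in $\cl_i$. That is a local fix.

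The second half, however, contains a genuine error of approach. You propose to compute the northwest rank conditions $r_{pq}(w_0v[i])$, but by Proposition~\ref{prop.Schubert}(1) the Schubert determinantal ideal $\ci_{w_0v[i]}$ cuts out the \emph{opposite} matrix Schubert variety $\cx^{\op}_{w_0v[i]}$, not $\cx_{w_0v[i]}$; and in any case $\ci_{w_0v[i]}\neq\ck_i$. Indeed, since $w_0v[i](\ell)=n+1-\ell$ for $\ell<i$, one has $r_{1,q}(w_0v[i])=0$ for $q<i$, so $\ci_{w_0v[i]}$ contains the linear forms $z_{11},\dots,z_{1,i-1}$, which do not lie in $\ck_i$; moreover your claimed value $r_{p,i}(w_0v[i])=i-1$ for all $p<n$ is false (it equals $1+\max(0,\,p-n+i-1)$, e.g. $r_{1,i}(w_0v[i])=1$), and the size-$i$ minors of $Z_{p\times i}$ by definition use the \emph{top} $p$ rows, so they can never be the minors $p_B$ with $B\subseteq\{2,\dots,n\}$. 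What the lemma asserts, and what Equation~\eqref{eqn.Schubertscheme} requires, is $\ci_{v[i]}=w_0\cdot\ck_i$: apply the rank-condition machinery to $v[i]$ itself. For $v[i]$ the only essential condition sits at $(p,q)=(n-1,i)$, where $r_{n-1,i}(v[i])=|\{1,\dots,i-1,n\}\cap\{1,\dots,n-1\}|=i-1$, so $\ci_{v[i]}$ is generated by the $i\times i$ minors of $Z_{(n-1)\times i}$, i.e. minors in rows $\{1,\dots,n-1\}$ and columns $\{1,\dots,i\}$ (all other $(p,q)$ produce either no minors or minors obtained from these by Laplace expansion along the first $i$ columns). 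Applying the involution $w_0\cdot z_{kj}=z_{w_0(k)j}$ converts rows $\{1,\dots,n-1\}$ into rows $\{2,\dots,n\}$, giving precisely the generators $p_B$ of $\ck_i$; hence $\ci_{v[i]}=w_0\cdot\ck_i$, and then \eqref{eqn.Schubertscheme} with $w=v[i]$ yields $\cx_{w_0v[i]}=\Spec(\C[\mathbf{z}]/w_0\cdot\ci_{v[i]})=\Spec(\C[\mathbf{z}]/\ck_i)$. As written, your route both miscomputes the ranks and identifies the ideal of the wrong scheme.
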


Subvarieties of the flag variety equal to the intersection of a Schubert and opposite Schubert variety are called \emph{Richardson varieties}.  Let $u,v\in S_n$ and recall that the Richardson variety $X_u^{\op}\cap X_v$ is nonempty if and only if $u\leq v$ in Bruhat order.  In that case, we say that the closure 
$$
\overline{\pi^{-1}(X_u^{\op}\cap X_v)} \subseteq M_n(\C)
$$
 is a \emph{matrix Richardson variety}. The next lemma shows that the sum of two of the  Schubert ideals considered above defines a matrix Richardson variety.

\begin{lemma} \label{lem.Schubert4} For all $i,j \in [n]$, the ideal $\cl_i + \ck_{j}$ is prime.  It follow that, for all $1\leq i < j \leq n$, the intersection 
\[
\cx_{u[i+1]}^\op \cap \cx_{w_0v[j]} = \Spec(\C[\mathbf{z}]/ (\cl_i + \ck_{j}))
\]
is a matrix Richardson variety. 
\end{lemma}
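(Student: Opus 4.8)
The plan is to show that $\cl_i + \ck_j$ is itself a Schubert determinantal ideal — specifically, the Schubert determinantal ideal attached to a single explicit permutation $w_{ij}$ — and then to invoke Proposition~\ref{prop.Schubert}(1), which guarantees that all Schubert determinantal ideals are prime. Since Lemma~\ref{lem.Schubert} already identifies $\cl_i = \ci_{u[i+1]}$ and $w_0 \cdot \ck_j = \ci_{v[j]}$, i.e. $\ck_j$ is the ideal cutting out $\cx_{w_0 v[j]}$, the scheme $\Spec(\C[\mathbf{z}]/(\cl_i + \ck_j))$ is set-theoretically the intersection $\cx_{u[i+1]}^{\op} \cap \cx_{w_0 v[j]}$; the content of the lemma is that this sum is already radical (indeed prime), so the scheme-theoretic intersection equals the reduced intersection and is irreducible.

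First I would write down the candidate permutation. The generators of $\cl_i$ are the linear forms $z_{11},\dots,z_{1i}$, which are exactly the size-$1$ minors of $Z_{1\times q}$ for $q\le i$; this is the rank condition $r_{1q} = 0$ for $q \le i$, forcing $w(q) \neq 1$ for $q\le i$, i.e. the value $1$ sits in a column $> i$ — the $u[i+1]$ condition. The generators of $\ck_j$ are the $j\times j$ minors of $Z$ using rows from $\{2,\dots,n\}$ and columns $1,\dots,j$; equivalently they are the $j\times j$ minors of $Z_{n\times j}$ that avoid row $1$, which together with (the absence of any) further constraints encode the rank condition that $Z_{n\times j}$ drops rank exactly on the subspace where row $1$ would be needed — this is the condition forcing the value $n$ into a column $\le j$, the $v[j]$ condition at the level of $w_0 v[j]$. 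Combining: the permutation $w_{ij}$ should be the one whose one-line notation places $1$ in position $i+1$ and $n$ in position $j$ (which is consistent precisely when $i+1 \le j$, i.e. $i < j$), with the remaining entries $2,\dots,n-1$ filling the other positions in increasing order left to right. I would then verify directly from the formula $r_{pq}(w) = |\{w(1),\dots,w(q)\}\cap\{1,\dots,p\}|$ that the essential rank conditions of $w_{ij}$ are exactly $\{r_{1,q} : q\le i\}$ together with the rank conditions on the northwest $n\times j$ submatrices coming from where $n$ is placed, and that the minors generating $\ci_{w_{ij}}$ are exactly the union of the generators of $\cl_i$ and of $\ck_j$. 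Once this matching of generators is in place, primeness is immediate from Proposition~\ref{prop.Schubert}(1), and then $\Spec(\C[\mathbf{z}]/(\cl_i+\ck_j)) = \cx_{w_{ij}}^{\op}$ is a reduced irreducible scheme; since set-theoretically it is $\cx_{u[i+1]}^{\op}\cap \cx_{w_0v[j]}$, intersecting with $G$ and pushing down along $\pi$ identifies it with $\overline{\pi^{-1}(X_{u[i+1]}^{\op}\cap X_{v[j]}^{\,\cdot})}$, a matrix Richardson variety, as claimed.

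The main obstacle I anticipate is the bookkeeping in the second half: verifying that the $j\times j$ minors of $Z$ on row-set $\{2,\dots,n\}$, columns $1,\dots,j$ are precisely the \emph{extra} generators (beyond those forced by the $\cl_i$ part) of $\ci_{w_{ij}}$, with no redundant or missing minors. Concretely one must check that for $w_{ij}$ the relevant rank function satisfies $r_{n-1,j}(w_{ij}) = j-1$ while $r_{n-1,q}(w_{ij}) = q$ for $q < j$ — so that the only new size-$(1+r)$ minors imposed are the $j\times j$ minors on the bottom $n-1$ rows and first $j$ columns — and that all smaller northwest rank conditions of $w_{ij}$ either hold automatically on $M_n(\C)$ or are already consequences of $z_{11}=\dots=z_{1i}=0$. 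This is a finite, explicit check using the one-line notation of $w_{ij}$, but it requires care to handle the interaction between the "value $1$ in column $i+1$" and "value $n$ in column $j$" constraints and to confirm the essential-set description of $\ci_{w_{ij}}$ matches the naive generating set $\cl_i + \ck_j$ exactly (rather than up to radical). An alternative, if the generator-matching proves delicate, is to prove primeness of $\cl_i + \ck_j$ directly: the quotient ring is visibly a polynomial ring (in the variables $z_{1,i+1},\dots,z_{1,n}$ after setting the others to zero) tensored with the coordinate ring $\C[\mathbf{z}']/\ck_j'$ of a classical determinantal variety of the submatrix on rows $2,\dots,n$, and determinantal ideals of generic matrices are prime by the classical theory; but the Schubert-ideal identification is cleaner and fits the paper's framework, so I would pursue that first.
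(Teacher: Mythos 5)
Your primary route contains a genuine gap: $\cl_i+\ck_j$ is in general \emph{not} a Schubert determinantal ideal, so the planned identification $\cl_i+\ck_j=\ci_{w_{ij}}$ cannot be carried out. Under the paper's convention, $\ci_w$ is generated by minors of the \emph{northwest} submatrices $Z_{p\times q}$, i.e.\ minors using the \emph{top} $p$ rows; the rank condition $r_{n-1,j}(w_{ij})=j-1$ that you invoke therefore imposes $j\times j$ minors of the top $n-1$ rows, whereas the generators of $\ck_j$ are minors on rows $2,\dots,n$. Those bottom-row minors only arise after twisting by $w_0$ (this is exactly the content of Lemma~\ref{lem.Schubert}: $w_0\cdot\ck_j=\ci_{v[j]}$, so $\ck_j$ cuts out the matrix Schubert variety $\cx_{w_0v[j]}$, not an opposite one). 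Geometrically, $V(\ci_w)=\cx_w^{\op}$ is left $B_-$-invariant, while $\cx_{w_0v[j]}$ is only left $B$-invariant; their intersection is left $T$-invariant but not left $B_-$-invariant, so it cannot equal any $\cx_w^{\op}$ outside degenerate cases such as $j=n$. Concretely, for $n=3$, $i=1$, $j=2$ one has $\cl_1+\ck_2=\langle z_{11},\, z_{21}z_{32}-z_{22}z_{31}\rangle$, of codimension $2$, while the only length-two permutations give $\ci_{[231]}=\langle z_{11},z_{12}\rangle$ and $\ci_{[312]}=\langle z_{11},z_{21}\rangle$; similarly for $n=4$, $i=1$, $j=3$ your candidate $w_{13}=[2143]$ yields $\langle z_{11},\det Z_{3\times 3}\rangle$ (top-left $3\times 3$ minor), not $\cl_1+\ck_3$ (bottom-left minor). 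Note also that $w_{ij}$ is not even well defined when $j=i+1$, a case allowed by the lemma.

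The alternative you relegate to your last sentences is the correct route, and it is essentially the paper's proof: $\cl_i$ involves only the row-one variables $z_{11},\dots,z_{1i}$, while $\ck_j$ lies in $\C[\mathbf{z}']$ for the variables in rows $2,\dots,n$; each is prime (for $\ck_j$ quote Lemma~\ref{lem.Schubert} and Proposition~\ref{prop.Schubert}(1), or classical primeness of maximal-minor ideals), and a sum of prime ideals in disjoint sets of variables is prime because the quotient is a tensor product of domains over $\C$. To finish the second assertion you still need the two steps your closing sentence glosses over: $u[i+1]\leq_{\Br} w_0v[j]$ holds exactly when $i<j$, so $X_{u[i+1]}^{\op}\cap X_{w_0v[j]}$ is nonempty; and then $\pi^{-1}(X_{u[i+1]}^{\op}\cap X_{w_0v[j]})=\pi^{-1}(X_{u[i+1]}^{\op})\cap\pi^{-1}(X_{w_0v[j]})$ is a nonempty open subset of the irreducible scheme $\Spec(\C[\mathbf{z}]/(\cl_i+\ck_j))$, hence dense, so its closure is the whole intersection, which is what makes it a matrix Richardson variety.
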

\begin{proof} 
Observe that $\cl_i = \left< z_{11}, z_{12}, \ldots, z_{1i} \right>$ and $\ck_j \subseteq \C[\mathbf{z}']$ with $\mathbf{z'} = \{z_{ij} \mid 2\leq i \leq n, 1\leq j \leq n\}$ are prime ideals  whose generators have no variables in common. Thus the sum $\cl_i + \ck_j$ is prime, and the affine subscheme $\cx_{u[i+1]}^{\op} \cap \cx_{w_0v[j]}$ in $\C[\mathbf{z}]$ defined by $\cl_i + \ck_{j}$ is reduced. One may check that $u[i+1]\leq_{\Br} w_0v[j]$ if and only if $i<j$, implying $X_{u[i+1]}^{\op} \cap X_{w_0v[j]}$ nonempty exactly when $i<j$.  Finally observe that $\pi^{-1}(X_{u[i+1]}^{\op} \cap X_{w_0v[j]})= \pi^{-1}(X_{u[i+1]}^{\op} )\cap \pi^{-1}(X_{w_0v[j]})\subseteq \cx^{op}_{u[i+1]}\cap \cx_{w_0v[j]}$, so the intersection being reduced implies 
$$
\overline{\pi^{-1}(X_{u[i+1]}^{\op} \cap X_{w_0v[j]})}=\cx_{u[i+1]}^{\op} \cap \cx_{w_0v[j]}.
$$ 
\end{proof}

 Toward the proof of Theorem~\ref{thm1}, we highlight several needed properties of the ideals $\cl_i$ and $\ck_{j}$ in the following lemmas.

\begin{lemma}\label{lemma.inclusions} The following statements all hold for $i,j,k,\ell\in [n-1]$.
\begin{enumerate}
\item Suppose $i\leq j$. Then $\cl_i \subseteq \cl_j$ and $\ck_{j}\subseteq \ck_i$.
\item Suppose $i<j$ and $k<\ell$.  Then $\cl_i+ \ck_j \subseteq \cl_k+ \ck_\ell$ if and only if $i\leq k$ and $j \geq \ell$.
\end{enumerate}
\end{lemma}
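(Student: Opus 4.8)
The plan is to argue both parts directly from the explicit generators, using that $\cl_m$ is generated by linear forms in the row-$1$ variables $z_{11},\dots,z_{1m}$, while $\ck_m$ is generated by forms of degree $m$ involving only the variables $z_{st}$ with $s\ge 2$. Geometrically this amounts to the statement that, passing to the Richardson varieties of Lemma~\ref{lem.Schubert4}, one has $\cx_{u[k+1]}^{\op}\cap\cx_{w_0v[\ell]}\subseteq\cx_{u[i+1]}^{\op}\cap\cx_{w_0v[j]}$ exactly when $i\le k$ and $j\ge\ell$; but the ideal-theoretic route is self-contained, so I would give that.

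\emph{Part (1).} The inclusion $\cl_i\subseteq\cl_j$ for $i\le j$ is immediate since $\{z_{11},\dots,z_{1i}\}\subseteq\{z_{11},\dots,z_{1j}\}$. For $\ck_j\subseteq\ck_i$, I would fix a generator $p_{B'}=\det(z_{st})_{s\in B',\,1\le t\le j}$ of $\ck_j$, with $B'\subseteq\{2,\dots,n\}$ and $|B'|=j$, and expand this determinant by the generalized Laplace rule along its first $i$ columns:
\[
p_{B'}=\sum_{\substack{R\subseteq B'\\ |R|=i}} \pm\, \det(z_{st})_{s\in R,\,1\le t\le i}\;\det(z_{st})_{s\in B'\setminus R,\,i+1\le t\le j}.
\]
Each first factor is precisely a generator $p_R$ of $\ck_i$ (here $R\subseteq B'\subseteq\{2,\dots,n\}$ and $|R|=i$), so $p_{B'}\in\ck_i$; as this holds for every generator, $\ck_j\subseteq\ck_i$.

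\emph{Part (2).} The direction $(\Leftarrow)$ is immediate from Part (1): if $i\le k$ then $\cl_i\subseteq\cl_k$, and if $j\ge\ell$ then $\ck_j\subseteq\ck_\ell$, so $\cl_i+\ck_j\subseteq\cl_k+\ck_\ell$. For $(\Rightarrow)$, assume $\cl_i+\ck_j\subseteq\cl_k+\ck_\ell$, and note that $k<\ell$ together with $k\ge 1$ forces $\ell\ge 2$, so $\ck_\ell$ is generated in degree $\ge 2$ and in particular has vanishing degree-$1$ component; hence the degree-$1$ graded piece of the homogeneous ideal $\cl_k+\ck_\ell$ is exactly $\Span_\C\{z_{11},\dots,z_{1k}\}$. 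If $i>k$ then $z_{1,k+1}\in\cl_i\subseteq\cl_k+\ck_\ell$, forcing the linear form $z_{1,k+1}$ to lie in $\Span_\C\{z_{11},\dots,z_{1k}\}$, which is absurd; so $i\le k$. Next suppose $j<\ell$. Any generator $p_B$ of $\ck_j$ is a nonzero homogeneous polynomial of degree $j$ involving no row-$1$ variable, and $p_B\in\ck_j\subseteq\cl_k+\ck_\ell$. Applying the surjective $\C$-algebra map $\C[\mathbf{z}]\to\C[\mathbf{z}']$ that sends every $z_{1t}\mapsto 0$ and fixes the variables $z_{st}$ with $s\ge 2$ carries $\cl_k$ to $0$ and $\ck_\ell$ onto the ideal $\ck_\ell\subseteq\C[\mathbf{z}']$ while fixing $p_B$, so $p_B\in\ck_\ell$ inside $\C[\mathbf{z}']$; but $\ck_\ell$ is generated in degree $\ell$, so every nonzero element of it has all homogeneous components of degree $\ge\ell>j$, contradicting $\deg p_B=j$. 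Hence $j\ge\ell$.

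The computations involved (the Laplace expansion and tracking graded pieces) are routine; the only step needing real care is the $(\Rightarrow)$ direction, where one must select the right witnesses — a single off-support linear form $z_{1,k+1}$ to force $i\le k$, and a minimal-size minor $p_B$ together with the degree bound on $\ck_\ell$ to force $j\ge\ell$ — and verify that $\ell\ge 2$ so that $\ck_\ell$ contributes nothing in low degree.
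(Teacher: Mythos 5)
Your proof is correct, and for the nontrivial direction it takes a genuinely different route from the paper. The paper disposes of part (1) as immediate from the definitions (your generalized Laplace expansion along the first $i$ columns is exactly the verification it leaves implicit), and for part (2)($\Rightarrow$) it argues geometrically: by Lemma~\ref{lem.Schubert4} the ideal inclusion gives an inclusion of Richardson varieties $X_{u[k+1]}^{\op}\cap X_{w_0v[\ell]}\subseteq X_{u[i+1]}^{\op}\cap X_{w_0v[j]}$, which forces the Bruhat comparisons $u[i+1]\leq_{\Br}u[k+1]$ and $v[\ell]\geq_{\Br}v[j]$, hence $i\leq k$ and $j\geq\ell$. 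You instead stay entirely inside $\C[\mathbf{z}]$: comparing degree-$1$ graded pieces (valid since $k<\ell$ forces $\ell\geq 2$, so $\ck_\ell$ contributes nothing in degree $1$) rules out $i>k$, and the specialization $z_{1t}\mapsto 0$ onto $\C[\mathbf{z}']$ together with the fact that $\ck_\ell$ is generated in degree $\ell$ rules out $j<\ell$. Your argument is self-contained and avoids both the primality/nonemptiness input of Lemma~\ref{lem.Schubert4} and the (slightly glossed) step that a containment of Richardson varieties yields Bruhat inequalities; the paper's argument is shorter given that machinery, and keeps the combinatorics of $u[\cdot]$, $v[\cdot]$ and Bruhat order in view, which is how the lemma gets used later. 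Both proofs are complete; one small point worth keeping in yours is the explicit remark that $\ck_j\neq 0$ for $j\in[n-1]$ (there is at least one $j$-subset of $\{2,\dots,n\}$), so the witness $p_B$ exists.
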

\begin{proof}  The assertions of (1) follow easily from the definitions.

To prove (2), observe that $i\leq k$ and $j\geq \ell$ immediately implies $\cl_i+ \ck_j \subseteq \cl_k+ \ck_\ell$ by (1). Suppose now that $\cl_i+ \ck_j \subseteq \cl_k+ \ck_\ell$.  By Lemma~\ref{lem.Schubert4}, there an inclusion of Richardson varieties,
\[
X_{u[k+1]}^{\op} \cap X_{w_0v[\ell]} \subseteq X_{u[i+1]}^{\op} \cap X_{w_0v[j]}.
\]
The inclusion implies $u[i+1]\leq_{\Br} u[k+1]$, which in turn implies $i\leq k$.
Similarly, the inclusion implies $w_0v[\ell]\leq_{\Br} w_0v[j]$, or, equivalently, $v[\ell]\geq_{\Br} v[j]$. A quick check shows $j\geq \ell$.
\end{proof}

Next, we argue that the product of ideals $\cl_i \cdot \ck_j$ is radical.  

\begin{lemma}\label{lem.intersection} For all $i,j\in [n-1]$, the product $\cl_i\cdot \ck_j$ is radical. In particular, $\cl_i\cdot \ck_j = \cl_i \cap \ck_j$. 
\end{lemma}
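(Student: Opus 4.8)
The plan is to show that the product ideal $\cl_i \cdot \ck_j$ coincides with the intersection $\cl_i \cap \ck_j$, which is already known to be radical — indeed, it is the prime ideal $\cl_i + \ck_j$ up to the subtlety that $\cl_i \cap \ck_j$ need not equal $\cl_i + \ck_j$. Wait: actually the key realization is that $\cl_i$ and $\ck_j$ live in \emph{disjoint} sets of variables. The ideal $\cl_i = \langle z_{11}, \ldots, z_{1i}\rangle$ is generated by first-row variables, while $\ck_j$, being generated by minors $p_B$ with $B \subseteq \{2,\ldots,n\}$, lies entirely in the subring $\C[\mathbf{z}']$ where $\mathbf{z}' = \{z_{k\ell} : 2 \leq k \leq n,\ 1 \leq \ell \leq n\}$ — this is exactly the observation made in the proof of Lemma~\ref{lem.Schubert4}. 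So the first step is to record this variable-disjointness carefully and to set up the tensor decomposition $\C[\mathbf{z}] = \C[z_{11}, \ldots, z_{1n}] \otimes_\C \C[\mathbf{z}']$.

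The main step is then a general commutative-algebra fact: if $R = A \otimes_\C B$, and $\fa \subseteq A$, $\fb \subseteq B$ are ideals, then the product $(\fa \otimes B)\cdot(A \otimes \fb)$ inside $R$ equals the intersection $(\fa \otimes B) \cap (A \otimes \fb) = \fa \otimes \fb$ (viewing everything inside $R$). This is because a $\C$-basis of $R$ is given by products of basis monomials from $A$ and from $B$; an element of $\fa \otimes B$ is a combination of basis monomials each of which has its $A$-part in $\fa$, and similarly for $A \otimes \fb$, so an element lying in both has every basis monomial with $A$-part in $\fa$ \emph{and} $B$-part in $\fb$, i.e.\ lies in $\fa \otimes \fb$; conversely $\fa \otimes \fb$ is visibly contained in the product. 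Moreover $\fa \otimes \fb$ is radical whenever $\fa$ and $\fb$ are: if $f^m \in \fa \otimes \fb$, then reducing mod $\fa \otimes B$ shows $f^m \in \fa \otimes B$, hence (as $A/\fa$ is reduced when $\fa$ is radical... but wait, we don't need $\fa$ radical separately) — more directly, $R/(\fa \otimes \fb)$ surjects onto $(A/\fa) \otimes (B/\fb)$ with kernel... hmm, actually $R/(\fa \otimes \fb)$ is \emph{not} simply $(A/\fa)\otimes(B/\fb)$. Let me instead argue radicality via the identity $\fa \otimes \fb = (\fa \otimes B) \cap (A \otimes \fb)$: an intersection of two radical ideals is radical, and $\fa \otimes B$ is radical precisely because $R/(\fa \otimes B) \cong (A/\fa) \otimes_\C B$, which is reduced since $A/\fa$ is reduced (here $\fa = \cl_i$ is generated by a subset of the variables, hence $A/\cl_i$ is a polynomial ring, reduced) and tensoring a reduced $\C$-algebra with a polynomial ring over the algebraically closed field $\C$ stays reduced; similarly $\ck_j$ is prime by Proposition~\ref{prop.Schubert}(1) (via Lemma~\ref{lem.Schubert}), so $B/\ck_j$ is a domain and $A \otimes (B/\ck_j)$ is reduced. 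Thus $\cl_i \cap \ck_j$ is radical, and we are done.

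Applying this with $A = \C[z_{11}, \ldots, z_{1n}]$, $B = \C[\mathbf{z}']$, $\fa = \cl_i$ (generated inside $A$ — note $\cl_i$ only involves $z_{11}, \ldots, z_{1i}$, which are among the generators of $A$), and $\fb = \ck_j \subseteq B$, we conclude $\cl_i \cdot \ck_j = \cl_i \cap \ck_j$ and this ideal is radical. One small point to double-check: $\ck_j$ really does lie in $\C[\mathbf{z}']$ — each generator $p_B = \det(z_{k\ell})_{k \in B,\, 1 \le \ell \le i}$ with $B \subseteq \{2,\ldots,n\}$ uses only rows indexed by $2,\ldots,n$, so yes. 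Another point: we should handle the degenerate cases $i = 0$ (where $\cl_0 = \langle 0\rangle$) and $j = n$ (where $\ck_n = \langle 0 \rangle$) — but in those cases the product and intersection are both $\langle 0 \rangle$, which is radical since $\C[\mathbf{z}]$ is a domain, so the statement is trivially true; restricting to $i, j \in [n-1]$ as in the statement, both $\cl_i$ and $\ck_j$ are proper and the argument above applies cleanly.

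The hard part, such as it is, will really just be stating and citing (or proving in two lines) the tensor-product lemma about products versus intersections of extended ideals, and making sure the reducedness of $A \otimes_\C B/\ck_j$ is justified — this uses that $\C$ is a perfect (algebraically closed) field, so that the tensor product over $\C$ of reduced algebras is reduced, which is standard. Everything else is bookkeeping about which variables appear where. I would therefore organize the write-up as: (i) the variable-disjointness observation and tensor decomposition; (ii) the two-line proof that $\cl_i \cdot \ck_j = \cl_i \cap \ck_j$ using that a monomial basis of $\C[\mathbf{z}]$ factors through the two variable sets; (iii) the conclusion that $\cl_i \cap \ck_j$, being the intersection of the radical ideals $\cl_i$ and $\ck_j$ (the latter prime by Lemma~\ref{lem.Schubert} and Proposition~\ref{prop.Schubert}(1), the former generated by variables), is radical.
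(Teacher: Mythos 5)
Your argument is correct, but it takes a different route from the paper. You exploit the fact that $\cl_i$ and $\ck_j$ involve disjoint sets of variables to decompose $\C[\mathbf{z}] = A \otimes_\C B$ with $A = \C[z_{11},\ldots,z_{1n}]$ and $B=\C[\mathbf{z}']$, prove by elementary linear algebra (a common monomial basis adapted to $\fa\subseteq A$ and $\fb\subseteq B$) that the product of the extended ideals equals their intersection, and then get radicality from the fact that $\cl_i$ and $\ck_j$ are prime ideals of $\C[\mathbf{z}]$ (the latter via Lemma~\ref{lem.Schubert} and Proposition~\ref{prop.Schubert}(1)), so their intersection is radical. The paper instead runs a Gr\"obner basis argument: with a diagonal term order, the minors $p_B$ are a Gr\"obner basis for $\ck_j$ by Knutson--Miller, the variables $z_{11},\ldots,z_{1i}$ are one for $\cl_i$, the pairwise relatively prime leading terms make the products $z_{1k}p_B$ a Gr\"obner basis for $\cl_i\cdot\ck_j$ with squarefree initial ideal, and radicality follows from the Herzog--Hibi criterion; the equality $\cl_i\cdot\ck_j=\cl_i\cap\ck_j$ is then deduced from radicality via $\rad(\cl_i\cdot\ck_j)=\rad(\cl_i\cap\ck_j)$, the reverse of your logical order. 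Your approach is more elementary --- it needs no term orders, no citation of the Gr\"obner basis theorem for determinantal ideals, and (as you note mid-proof) no perfectness-of-the-field subtleties, since one tensor factor is a polynomial ring --- while the paper's approach produces an explicit Gr\"obner basis for the product ideal as a byproduct. Two cosmetic points for your write-up: the generators of $\ck_j$ should be the $j\times j$ minors on columns $1,\ldots,j$ (you wrote $1\le\ell\le i$), and the worry about $i=0$ or $j=n$ is moot since the statement restricts to $i,j\in[n-1]$.
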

\begin{proof} Let $\preceq$ be a monomial order on $\C[\mathbf{z}]$ and recall that if the monomial ideal generated by leading terms of polynomials in an ideal $\ci$ with respect to $\preceq$ is square free, then $\ci$ is radical \cite[Prop.~3.3]{Herzog-Hibi}. This fact can be used to prove that $\cl_i\cdot \ck_j$ is radical using a simple Gr\"obner basis argument (see~\cite{CLO} for background on Gr\"obner bases); we provide an outline below for the interested reader.

Let $\preceq$ be any diagonal term order on $\C[\mathbf{z}]$. It is well known (see e.g.~\cite[Thm.~B]{Knutson-Miller2005}) that
\[
\{ p_B \mid B\subseteq \{2, \ldots, n\}, |B|=j\}
\]
is a Gr\"obner basis for $\ck_j$ with respect to this ordering and it's obvious that $\{z_{11}, z_{12}, \ldots, z_{1i}\}$ is a Gr\"obner basis for $\cl_i$. Since the leading monomial of any $p_B$ and any $z_{1k}$ are relatively prime, it is straightforward to check that 
\[
\{ z_{1k}p_B \mid  B\subseteq \{2, \ldots, n\}, |B|=j, 1\leq k \leq i \}
\]
is a Gr\"obner basis for $ \cl_i \cdot \ck_j$. The leading term of every polynomial in this Gr\"obner basis is square free, so $\cl_i \cdot \ck_j$ is radical.  Thus
\begin{eqnarray*}
\cl_i \cdot \ck_j = \rad(\cl_i \cdot \ck_j) =  \rad(\cl_i \cap \ck_j ) = \rad(\cl_i )\cap \rad(\ck_j) = \cl_i \cap \ck_j
\end{eqnarray*}
as desired.
\end{proof}

%%%%%%%%%%%%%%%%%%%%%

\section{Matrix Hessenberg schemes} \label{sec.Hess.schemes}

Recall from the Introduction that a function $h:[n]\to [n]$ is a \emph{Hessenberg function} if $h(i-1)\leq h(i)$ and $h(i)\geq i$ for all $i$.
We denote a Hessenberg function $h:[n]\to [n]$  by listing its values, e.g.~$h=(h(1), \ldots, h(n))$.  
Although $0$ is not technically in the domain of $h$, it will be convenient for us to assume $h(0)=0$.

There is a bijection from the set of all Hessenberg functions $h:[n]\to [n]$ to the set of Dyck paths of length $2n$, as follows. Each Hessenberg function $h$ may be uniquely identified with a lattice path from the upper left corner of an $n\times n$ grid to the lower right corner obtained by requiring that the horizontal step in column $i$ occurs $h(i)$ rows from the top. We call the resulting image \emph{diagram of $h$}. The diagrams of the functions $h_1=(2,3,3,4)$ and $h_2=(2,2, 4, 4)$ 
are displayed below.

\begin{center}
\begin{tikzpicture}[scale=.5]
\draw (0,0) grid (4,4);
\draw[ultra thick,blue] 
(0,4) -- (0,2) -- (1,2) -- (1,1) -- (3,1)-- (3,0) -- (4,0);
\end{tikzpicture}
\qquad\qquad\qquad
\begin{tikzpicture}[scale=.5]
\draw (0,0) grid (4,4);
\draw[ultra thick,blue] 
(0,4) -- (0,2) -- (2,2) --  (2,0) -- (4,0);
\end{tikzpicture}
\end{center}
The diagrams help visualize examples presented in later sections.

For $j\in [n]$, let $v_j$ denote the $j$th column vector of the matrix $Z = (z_{ij})$, i.e.
\begin{equation}\label{eqn.col}
    v_j := \sum_{i=1}^n z_{ij}e_i.
\end{equation}
Let $\mx\in \mathfrak{gl}_n(\C)$. Define the ideal $\ci_{\mx,h, i}\subseteq \C[\mathbf{z}]$ as follows. If $h(i)=n$, then $\ci_{\mx,h, i}:= 0$. 
For each $i\in [n]$ such that $h(i)<n$, define $\ci_{\mx,h, i}\subseteq \C[\mathbf{z}]$ to be the ideal in $\C[\mathbf{z}]$ generated by the rank condition: 
\begin{eqnarray}\label{eqn.rank.cond}
\mathrm{rk} \begin{bmatrix} | & | &  & | & | & | &  & | \\  \mx v_1 & \mx v_2 & \cdots & \mx v_i & v_1 & v_{2} & \cdots & v_{h(i)}\\ | & | &  & | & | & | &  & |
\end{bmatrix} \leq h(i).
\end{eqnarray}
That is, $\ci_{\mx, h, i}$ is the ideal generated by all $(h(i)+1)\times (h(i)+1)$-minors of the $n\times (h(i)+i)$-matrix above.

\begin{rem}\label{rem:LiKh(i)} When $\semi = \mathrm{diag}(1,0, \dots, 0)$, the ideal $\ci_{\semi, h, i}$ is determined by the rank condition:
\begin{eqnarray*}
\mathrm{rk} \begin{bmatrix} z_{11} & z_{12} & \cdots  & z_{1i} & z_{11} & z_{12} & \cdots & z_{1h(i)} \\  0 & 0 & \cdots & 0 & z_{21} & z_{22} & \cdots & z_{2h(i)}\\  \vdots & \vdots & \ddots & \vdots & \vdots & \vdots & \ddots & \vdots\\ 0 & 0 & \cdots & 0 & z_{n1} & z_{n2} & \cdots & z_{nh(i)}
\end{bmatrix} \leq h(i).
\end{eqnarray*}
In particular, $\ci_{\semi, h, i} = \cl_i \cdot \ck_{h(i)}$, with $\cl_i$ and $\ck_j$ defined in~\eqref{eq:LiKi}.
\end{rem}

We define
\begin{eqnarray}\label{eqn.hess-ideal}
\ci_{\mx, h} : = \sum_{i\in [n]} \ci_{\mx, h,  i}\subseteq \C[\mathbf{z}].
\end{eqnarray}
Consider the action of $B$ on $M_n(\C)$ by right multiplication. This induces an action of $B$ on $\C[\mathbf{z}]$ defined on each variable $z_{ij}$ by 
$$z_{ij}\cdot b = \sum_k z_{ik}b_{kj}= \sum_{k\leq j} b_{kj}z_{ik}$$
for each upper triangular matrix $b=(b_{ij})\in B$.  This action of $B$ amounts to rescaling each column vector $v_j$ by a nonzero value, and adding any multiple of `earlier' columns $v_\ell$ with $\ell<j$. 
Note that the ideal $\ci_{\mx, h}$ is obviously invariant under this action of $B$ on $\C[\mathbf{z}]$ since the rank condition~\eqref{eqn.rank.cond} is invariant under such column operations.

Recall that $G=GL_n(\C)$ is a principal open subset of the affine scheme $M_n(\C) = \Spec (\C[\mathbf{z}])$ of $n\times n$ matrices.  As is typical, we identify $GL_n(\C)$ as an affine subscheme of $\Spec(\C[\mathbf{z}, y])$ with coordinate ring $\C[\mathbf{z}, d^{-1}]:= \C[\mathbf{z}, y]/ \left< yd-1\right>$ obtained via localization, where $d\in \C[\mathbf{z}]$ denotes the determinant function $\det(Z)$. Let  
\begin{eqnarray}\label{eqn.GLn.coord}
\iota: \C[\mathbf{z}]\hookrightarrow  \C[\mathbf{z}, d^{-1}]
\end{eqnarray}
denote the canonical injection.  Given an ideal $\ci\subseteq \C[\mathbf{z}]$, we write $\tilde{\ci}$ to denote the preimage of the ideal generated by $\iota(\ci)$.

\begin{definition} Given a matrix $\mx\in \mathfrak{gl}_n(\C)$ and Hessenberg function $h:[n]\to [n]$, the associated \emph{matrix Hessenberg scheme} is the affine scheme $$\cy_{\mx, h}:= \Spec(\C[\mathbf{z}]/\tilde{\ci}_{\mx, h}).$$
\end{definition}

Recall that $\pi: G\to G/B$ defines a correspondence between (right) $B$-invariant subschemes of $G$ and subschemes of $G/B$.
Thus, given a Hessenberg scheme $Y_{\mx, h}$
in the flag variety, we may instead consider the (right) $B$-invariant subscheme $\pi^{-1}(Y_{\mx,h})$ of $GL_n(\C)$.  By construction, the matrix Hessenberg scheme $\cy_{\mx, h}$ is precisely the Zariski closure of $\pi^{-1}(Y_{\mx, h})$ in $M_n(\C)=\Spec(\C[\mathbf{z}])$.   Furthermore, though the inclusion 
$\ci_{\mx, h} \subset \tilde{\ci}_{\mx, h}$ 
may be strict, $\cy_{\mx, h}$ is precisely the closure of the open set of prime ideals in the scheme $\Spec (\C[\mathbf{z}] / \ci_{\mx, h})$ that do not contain the determinant function $d$. 
We summarize these observations in the following remark.

\begin{rem}\label{rem.quotient} Given  the matrix $\mx\in \mathfrak{gl}_n(\C)$ and Hessenberg function $h: [n] \to [n]$, let  $Y_{\mx, h}$ denote corresponding the Hessenberg scheme in the flag variety $GL_n(\C)/B$ (also refered to as the Hessenberg variety).  Then
\begin{enumerate}
\item $\pi^{-1}(Y_{\mx, h}) = GL_n(\C)\cap \Spec(\C[\mathbf{z}] / \ci_{\mx, h})$, and
\vspace*{.08in}
\item $\cy_{\mx, h} = \overline{\pi^{-1}(Y_{\mx, h})}$.  
\end{enumerate}
\end{rem}

A natural question is to identify suitable generators of $\tilde{\ci}_{\mx, h}$.  Here ``suitable'' means explicit generators that can be used for computation using commutative algebra, such as a Gr\"obner basis.  In general, the ideals $\ci_{\mx, h}$ and $\tilde{\ci}_{\mx, h}$ may be distinct, demonstrated by Example~\ref{ex.notequal}. 
However, when the ideals {\em are} equal, we can leverage tools from commutative algebra on $\C[\mathbf{z}]$ to study matrix Hessenberg schemes via the generators of $\ci_{\mx, h}$.  This is exactly the approach of our arguments  below for $\mx$ a minimal semisimple element.

While the equations defining $\ci_{\mx, h}$ are given by rank conditions, it isn't obvious how to write down equations defining $\tilde{\ci}_{\mx, h}$.
For example, Berget and Fink~\cite{Berget-Fink2018} study ideals defining certain torus orbits in the set of $r\times n$ matrices over $\C$, and find that even in this case
the answer is not straightforward.

\begin{example}\label{ex.notequal} Let $n=5$ and $h=(3,3,5,5,5)$.  Let $E_{ij}$ denote the elementary matrix with $1$ in the $(i,j)$-th entry and all other entries equal to $0$.  Let $\mx= E_{12}+E_{23}+E_{34}+E_{45}$; in this case $Y_{\mx, h}$ is called a regular nilpotent Hessenberg variety, and $Y_{\mx, h}$ is known to be irreducible and reduced (see~\cite{ADGH2018, Abe-Fujita-Zeng2020}).  It is easy to confirm that $\ci_{\mx, h} = \ci_{\mx, h, 2}$ (see Lemma~\ref{lem.corners} below).  Thus,  $\ci_{\mx, h}$ is generated by all $4\times 4$ minors of 
\[
\begin{bmatrix} | & | &  | & | & |  \\  \mx v_1 & \mx v_2 & v_1 & v_2 & v_3 \\ | & | & | & | & | 
\end{bmatrix} = \begin{bmatrix} z_{21} & z_{22} & z_{11} & z_{12} & z_{13}\\ z_{31} & z_{32} & z_{21} & z_{22} & z_{23}\\ z_{41} & z_{42} & z_{31} & z_{32} & z_{33}\\ z_{51} & z_{52} & z_{41} & z_{42} & z_{43}\\ 0 & 0 & z_{51} & z_{52} & z_{53}  \end{bmatrix}.
\]
Since $Y_{\mx, h}$ is irreducible, the matrix Hessenberg scheme $\cy_{\mx, h}$ is also irreducible.  However, Macaulay2 can be used to verify that $\ci_{\mx, h}$ is not prime, but the intersection of two prime ideals. One of these ideals defines an affine subscheme that does not intersect $GL_5(\C)$.  In particular, $\ci_{\mx, h}\subsetneq \tilde{\ci}_{\mx, h}$.
\end{example}

\begin{rem}\label{rem.isom} Let $h$ be a fixed Hessenberg function. Matrix Hessenberg schemes for $h$ corresponding to conjugate matrices are isomorphic. Indeed, suppose $\mx, \mx' \in \mathfrak{gl}_n(\C)$ such that 
$\mx = g\mx'g^{-1}$ for some $g = (g_{ij})_{1\leq i,j\leq n}\in G$. The action of $g$ on the matrix $Z \in M_n(\C[\mathbf{z}])$ by left multiplication induces an isomorphism $\C[\mathbf{z}] \to \C[\mathbf{z}]$ defined by $z_{ij} \mapsto  g_{i1}z_{1j}+g_{i2}z_{2j} +\cdots + g_{in}z_{nj}$ for each $1\leq i, j\leq n$. This isomorphism maps $\ci_{\mx, h}$ onto $\ci_{\mx',h}$ and thus results in %yields the desired 
an isomorphism of schemes.
\end{rem}

The definition of the ideal $\ci_{\mx, h}$ given in~\eqref{eqn.hess-ideal} is overdetermined by the sum. It is sufficient to restrict to  `corners' of the Hessenberg function $h$.

\begin{definition} Let $h:[n]\to [n]$ be a Hessenberg function. We say $i\in[n]$ is a \emph{corner of $h$} whenever $h(i)>h(i-1)$.  Denote the set of all corners by $\cc(h)$.
\end{definition}

Our convention that $h(0)=0$ implies $1$ is always a corner of $h$. The corners of $h$ are an indexing set for the columns in the diagram of $h$ containing corners.  More specifically, $i\in \cc(h)$ implies that $(h(i), i)$ is a corner box in the diagram of $h$. 
For each $i\in [n]$, let
\[
i^*:= \max\{ k\in [n] \mid h(k)=h(i) \}.
\]
Notice that if $i$ is a corner of $h$, then $i^*+1\in \cc(h)$. Furthermore, there is no corner $k$ with $i<k<i^*+1$ in~$\cc(h)$.

\begin{example} The Hessenberg function $h=(4,4,4,6,6,6)$ has corners $1$ and $4$ with $1^*= 3$ and $4^*=6$. The function $h=(3,3,4,5,6,6)$ has corners $1$, $3$, $4$, and $5$ with $1^*=2$, $3^*=3$, $4^*=4$, and $5^*=6$.  The diagrams for these Hessenberg functions are displayed below, with corner boxes $(h(i),i)$ highlighted. 
\begin{center}
\begin{tikzpicture}[scale=.5]
\draw (0,0) grid (6,6);
\draw[fill=pink, draw=none] (0,2) rectangle (1,3);
\draw[fill=pink, draw=none] (3,0) rectangle (4,1);
\draw[ultra thick,blue] 
(0,6) -- (0,2) -- (3,2) -- (3,0) -- (6,0);
\end{tikzpicture}
\quad\quad\quad\quad\quad
\begin{tikzpicture}[scale=.5]
\draw (0,0) grid (6,6);
\draw[fill=pink, draw=none] (0,3) rectangle (1,4);
\draw[fill=pink, draw=none] (2,2) rectangle (3,3);
\draw[fill=pink, draw=none] (3,1) rectangle (4,2);
\draw[fill=pink, draw=none] (4,0) rectangle (5,1);
\draw[ultra thick,blue] 
(0,6) -- (0,3) -- (2,3) -- (2,2) -- (3,2)-- (3,1) -- (4,1) -- (4,0) -- (6,0);
\end{tikzpicture}
\end{center}
\end{example}

\begin{lemma}\label{lem.corners} For all Hessenberg functions $h$ and matrices $\mx \in \mathfrak{gl}_n(\C)$,
\[
\ci_{\mx, h} = \sum_{i\in \cc(h)} \ci_{\mx, h, i ^*}.
\]
Recall that $\ci_{\mx, h, n}=0$, so the corner $i\in \cc(h)$ such that $h(i)=n$ does not contribute to the sum. 
\end{lemma}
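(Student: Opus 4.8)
The plan is to show that for each $i \in [n]$ the ideal $\ci_{\mx,h,i}$ is already contained in $\ci_{\mx,h,i^*}$, where $i^* = \max\{k : h(k) = h(i)\}$ is the unique corner-type index with $h(i^*) = h(i)$ and $i^* \in \{j-1 : j \in \cc(h)\}$ — more precisely, $i^*$ itself need not be a corner, but $h$ is constant on $[i, i^*]$ and every value $h(i)$ is attained at exactly one $i^*$ of the form appearing in the sum $\sum_{i \in \cc(h)} \ci_{\mx,h,i^*}$. Granting such containments, the reverse inclusion is trivial since each $\ci_{\mx,h,i^*}$ appears as a summand of $\ci_{\mx,h} = \sum_{i\in[n]} \ci_{\mx,h,i}$, and the stated equality follows. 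The final sentence about $\ci_{\mx,h,n} = 0$ is immediate from the definition.

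First I would fix $i$ and set $m := h(i)$; let $i^*$ be as above, so $h(i^*) = m$ and $i \le i^*$. If $m = n$ then $\ci_{\mx,h,i} = 0$ and there is nothing to prove, so assume $m < n$. The ideal $\ci_{\mx,h,i}$ is generated by the $(m+1)\times(m+1)$ minors of the $n \times (m+i)$ matrix $M_i$ whose columns are $\mx v_1, \dots, \mx v_i, v_1, \dots, v_m$, and $\ci_{\mx,h,i^*}$ by the $(m+1)\times(m+1)$ minors of the $n\times(m+i^*)$ matrix $M_{i^*}$ with columns $\mx v_1,\dots,\mx v_{i^*}, v_1,\dots,v_m$ (note $h(i^*) = m$ so the "$v$-block" is identical in both). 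Since $i \le i^*$, the column set of $M_i$ is a subset of the column set of $M_{i^*}$, hence every generator of $\ci_{\mx,h,i}$ is also a generator of $\ci_{\mx,h,i^*}$, giving $\ci_{\mx,h,i} \subseteq \ci_{\mx,h,i^*}$.

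It remains to check that each index $i^*$ arising this way is accounted for by the corner sum, i.e.\ that $\{\,i^* : i \in [n]\,\} = \{\,j^* : j \in \cc(h)\,\}$; this is the combinatorial bookkeeping step. It follows because $i^*$ depends only on the value $h(i)$, and every value taken by $h$ is taken at some corner: if $h(i) = c$ then the smallest index $j$ with $h(j) = c$ satisfies $h(j) > h(j-1)$, so $j \in \cc(h)$ and $j^* = i^*$. Conversely each $j^*$ with $j \in \cc(h)$ is of the form $i^*$ trivially. Thus $\sum_{i\in[n]} \ci_{\mx,h,i} \subseteq \sum_{i\in[n]} \ci_{\mx,h,i^*} = \sum_{j\in\cc(h)} \ci_{\mx,h,j^*}$, and the reverse inclusion is clear since $\{j^* : j \in \cc(h)\} \subseteq [n]$. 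I do not expect any serious obstacle here: the only thing to be careful about is matching the indexing conventions for $i^*$ and corners, and confirming that the "$v$-columns" $v_1,\dots,v_{h(i)}$ coincide for $i$ and $i^*$ precisely because $h(i) = h(i^*)$ — which is the whole point of passing to $i^*$ rather than to $i$.
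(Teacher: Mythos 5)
Your proposal is correct and follows essentially the same route as the paper: the key step in both is that when $h(i)=h(j)$ and $i\le j$, the generating $(h(i)+1)\times(h(i)+1)$ minors for $\ci_{\mx,h,i}$ are among those for $\ci_{\mx,h,j}$ because the matrix for $i$ is obtained from the one for $j$ by deleting the columns $\mx v_{i+1},\dots,\mx v_j$, giving $\ci_{\mx,h,i}\subseteq \ci_{\mx,h,i^*}$. The extra bookkeeping you spell out (every value of $h$ is attained at a corner, so every $i^*$ is some $j^*$ with $j\in\cc(h)$) is exactly what the paper leaves implicit.
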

\begin{proof} The desired formula follows from that fact that if $i,j\in [n]$ such that $i<j$ and $h(i)=h(j)$, then $\ci_{\mx,h, i} \subseteq \ci_{\mx,h, j}$.  Indeed, our assumptions imply that the matrix
\begin{eqnarray}\label{eqn.1}
\begin{bmatrix} | & | &  & | & | & | &  & | \\  \mx v_1 & \mx v_2 & \cdots & \mx v_i & v_1 & v_{2} & \cdots & v_{h(i)}\\ | & | &  & | & | & | &  & |
\end{bmatrix} 
\end{eqnarray}
can be obtained from 
\begin{eqnarray}\label{eqn.2}
\begin{bmatrix} | & | &  & | & | & | &  & | \\  \mx v_1 & \mx v_2 & \cdots & \mx v_j & v_1 & v_{2} & \cdots & v_{h(j)}\\ | & | &  & | & | & | &  & |
\end{bmatrix}
\end{eqnarray}
by deleting the columns $\mx v_{i+1}, \ldots, \mx v_{j}$.  Furthermore, since $h(i)=h(j)$, the collection of all $(h(i)+1)\times (h(i)+1)$-minors of~\eqref{eqn.1} is a subset of all $(h(j)+1)\times (h(j)+1)$-minors of~\eqref{eqn.2}. Thus $\ci_{\mx,h, i} \subseteq \ci_{\mx,h, j}$ as desired.
\end{proof}

\begin{example}\label{ex.2444.1}
Let $n=4$ and $h=(2,4,4,4)$. The corner set of $h$ is $\cc(h)=\{1,2\}$, and we have   $1^*=1$ and $2^*=4$. The diagram for $h$ is displayed below, with the corner boxes $(h(i),i)$ highlighted. 
\begin{center}
\begin{tikzpicture}[scale=.5]
\draw (0,0) grid (4,4);
\draw[fill=pink, draw=none] (0,2) rectangle (1,3);
\draw[fill=pink, draw=none] (1,0) rectangle (2,1);
\draw[ultra thick,blue] 
(0,4) -- (0,2) -- (1,2) -- (1,0)  -- (4,0);
\end{tikzpicture}
\end{center}
Since $  \ci_{\mx, h, 4}=0$,  Lemma~\ref{lem.corners} implies 
$\ci_{\mx, h} = \ci_{\mx, h, 1},
$
which is generated by the $3\times 3$ determinantal conditions derived from
\begin{eqnarray}\label{rank.h2444}
\rk\begin{bmatrix} | & | & | \\  \mx v_1 &  v_1 & v_2 \\ | & | &  |  \end{bmatrix}\leq 2.
\end{eqnarray}
\end{example}

\begin{example}\label{ex.2444.3}
We continue Example~\ref{ex.2444.1}, with $h= (2, 4,4,4)$ and $\mx=\semi = \mathrm{\diag}(1,0,0,0)$. The rank conditions of \eqref{rank.h2444} are
$$
 \rk\begin{bmatrix} 
z_{11} &  z_{11} &  z_{12}\\ 
0	&  z_{21} &  z_{22}\\
0&  z_{31} &  z_{32}\\
0&  z_{41} &  z_{42}\\
 \end{bmatrix} \leq 2.
 $$
The rank conditions are satisfied exactly when all  $3\times 3$ minors vanish,  so
 $$
 \ci_{\semi, h} = \langle z_{11}(z_{21}z_{32}-z_{22}z_{31}), z_{11}(z_{21}z_{42}-z_{22}z_{41}), z_{11}(z_{31}z_{42}-z_{32}z_{41})
 \rangle
 $$
Macaulay2 confirms that $\ci_{\semi, h}$ is a reduced ideal, with associated primes
\[
\ck_2 = \langle z_{32}z_{41}-z_{31}z_{42},z_{22}z_{41}-z_{21}z_{42},z_{22}z_{31}-z_{21}z_{32}\rangle \; \mbox{and} \;
\cl_1 =  \langle z_{11}\rangle.
\]
We identify these as Schubert determinantal ideals using Lemma~\ref{lem.Schubert}. Thus $\cy_{\semi, h}$ is a union of the corresponding matrix Schubert varieties:
$$
\cy_{\semi, h}= \cx_{w_0[1423]} \cup \cx^{\op}_{[2134]} =  \cx_{[4132]} \cup \cx^{\op}_{[2134]}.
$$
\end{example}

%%%%%%%%%%%%%%%%%%%%%%%%%%%%%%%%

\section{The main theorem}\label{sec.proofs}

Suppose $\mx\in \mathfrak{gl}_n(\C)$ is a minimal semisimple element.  It is straightforward to show that any such matrix is conjugate to $\semi'= \diag(c_1,c_2,\ldots,c_2)$ for some $c_1,c_2\in \C$ such that $c_1\neq c_2$. The rank inequalities of \eqref{eqn.rank.cond} are satisfied for $\semi'$ if and only if they are satisfied for $\semi'' = \diag(c_1-c_2,0, \dots, 0)$ which are themselves satisfied if and only if they are satisfied for $\semi = \diag(1, 0,\dots, 0)$. Thus $\ci_{\semi', h} = \ci_{\semi ,h}$ for any fixed choice of Hessenberg function $h$. Using Remark~\ref{rem.isom}, we conclude that any minimal semisimple Hessenberg scheme $\cy_{\mx, h}$ is isomorphic to $\cy_{\semi, h}$.  
For this reason, it is sufficient to restrict our attention to the  
geometry
of $\cy_{\semi, h}$.

\begin{thm}\label{thm.ss.components}  Let $\semi = \mathrm{\diag} (1,0,\ldots, 0)$ and let $h:[n]\to[n]$ be a Hessenberg function. The matrix Hessenberg scheme  $\cy_{\semi, h}$ is a reduced union of matrix Richardson schemes:
\[
\cy_{\semi, h} = \bigcup_{i\in \cc(h)} \left(\cx_{u[i]}^{\op} \cap \cx_{w_0v[h(i)]} \right).
\]
Moreover, the ideal defining $\cy_{\semi, h}$ satisfies 
\begin{equation}\label{eqn.intersection}
\tilde{\ci}_{\semi, h} = \ci_{\semi, h}  =  \bigcap_{i\in \cc(h)} \left( \cl_{i-1} + \ck_{h(i)}  \right).
\end{equation}
\end{thm}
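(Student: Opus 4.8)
The plan is to first prove the ring-theoretic identity $\ci_{\semi,h}=\bigcap_{i\in\cc(h)}(\cl_{i-1}+\ck_{h(i)})$, then deduce $\tilde{\ci}_{\semi,h}=\ci_{\semi,h}$ from primality of the factors, and finally read off the scheme decomposition using Lemma~\ref{lem.Schubert4}.

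First I would put $\ci_{\semi,h}$ into a convenient shape. By Lemma~\ref{lem.corners} and Remark~\ref{rem:LiKh(i)} (together with $h(i^{*})=h(i)$) we have $\ci_{\semi,h}=\sum_{i\in\cc(h)}\cl_{i^{*}}\cdot\ck_{h(i)}$. Writing the corners as $i_1=1<i_2<\cdots<i_m$, we have $i_t^{*}=i_{t+1}-1$ for $t<m$ and $i_m^{*}=n$, and since $h(i_m)=h(n)=n$ the summand for $i_m$ is $\cl_n\cdot\ck_n=0$. Setting $a_t:=i_t-1$ and $b_t:=h(i_t)$ --- so $0=a_1<a_2<\cdots<a_m$ and $b_1<b_2<\cdots<b_m=n$ --- this becomes $\ci_{\semi,h}=\sum_{t=1}^{m-1}\cl_{a_{t+1}}\cdot\ck_{b_t}$. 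Using Lemma~\ref{lem.intersection} to replace each product by an intersection and noting $\bigcap_{i\in\cc(h)}(\cl_{i-1}+\ck_{h(i)})=\bigcap_{t=1}^{m}(\cl_{a_t}+\ck_{b_t})$, the full identity reduces to the purely algebraic statement
\[
\sum_{t=1}^{m-1}\cl_{a_{t+1}}\cap\ck_{b_t}=\bigcap_{t=1}^{m}\bigl(\cl_{a_t}+\ck_{b_t}\bigr),
\]
keeping in mind that $\cl_{a_1}=\cl_0=0$ and $\ck_{b_m}=\ck_n=0$.

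The inclusion ``$\subseteq$'' is immediate from Lemma~\ref{lemma.inclusions}(1): for $t\le m-1$ and $s\le m$, if $s\le t$ then $\cl_{a_{t+1}}\cap\ck_{b_t}\subseteq\ck_{b_t}\subseteq\ck_{b_s}$, and if $s>t$ then $\cl_{a_{t+1}}\cap\ck_{b_t}\subseteq\cl_{a_{t+1}}\subseteq\cl_{a_s}$; either way the $t$th summand lies in $\cl_{a_s}+\ck_{b_s}$. The reverse inclusion is the heart of the argument. The key structural fact is that the $\cl_k$ and $\ck_j$ involve disjoint sets of variables: the generators of $\cl_k$ use only the first row $z_{1\bullet}$, those of $\ck_j$ only the rows $z_{\ell\bullet}$ with $\ell\ge 2$. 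Thus $\C[\mathbf{z}]=\C[\mathbf{z}^{(1)}]\otimes_{\C}\C[\mathbf{z}^{(2)}]$, each $\cl_k$ extended from the first factor and each $\ck_j$ from the second. Since $\cl_{a_1}\subseteq\cdots\subseteq\cl_{a_m}$ is increasing and $\ck_{b_1}\supseteq\cdots\supseteq\ck_{b_m}$ decreasing, I would choose $\C$-linear complements to the successive members of each chain and obtain a decomposition $\C[\mathbf{z}]=\bigoplus_{r,s}\cv_{r,s}$ into ``bigraded cells'' in which $\cl_{a_t}$ is the span of the cells with first index $<t$ and $\ck_{b_t}$ the span of those with second index $>t$. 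Every ideal in the lattice generated by the $\cl_{a_t}$ and $\ck_{b_t}$ under $+$ and $\cap$ is then a sum of cells, so this lattice is distributive. Distributivity yields
\[
\bigcap_{t=1}^{m}(\cl_{a_t}+\ck_{b_t})=\sum_{S\subseteq[m]}\Bigl(\bigcap_{t\in S}\cl_{a_t}\Bigr)\cap\Bigl(\bigcap_{t\notin S}\ck_{b_t}\Bigr)=\sum_{S\subseteq[m]}\cl_{a_{\min S}}\cap\ck_{b_{\max(S^{c})}},
\]
and each summand lies in the left-hand side: if $\min S=1$ or $S\in\{\emptyset,[m]\}$ it is $0$, and otherwise, with $r:=\min S\ge2$ and $q:=\max(S^{c})$, the inclusions $1,\dots,r-1\in S^{c}$ force $q\ge r-1$, so either $q=m$ and the summand lies in $\ck_{b_m}=0$, or $q\le m-1$ and then $r\le q+1$ gives $\cl_{a_r}\cap\ck_{b_q}\subseteq\cl_{a_{q+1}}\cap\ck_{b_q}$, which is the $(t=q)$th summand on the left. (Equivalently, both sides equal the span of the cells $\cv_{r,s}$ with $r<s$.) Setting up the cell decomposition and confirming that it makes the lattice distributive is the step I expect to demand the most care; everything around it is bookkeeping.

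Granting the ideal identity, the rest is short. Each factor $\cl_{i-1}+\ck_{h(i)}$ is prime (Lemma~\ref{lem.Schubert4} when $1\le i-1<h(i)\le n$, and Lemma~\ref{lem.Schubert} for the boundary cases $i=1$ or $h(i)=n$), so $\ci_{\semi,h}$ is radical. Moreover $h(i)\ge i>i-1$, so Lemma~\ref{lem.Schubert4} gives $u[i]\le_{\Br}w_0v[h(i)]$ and identifies $\Spec(\C[\mathbf{z}]/(\cl_{i-1}+\ck_{h(i)}))=\cx_{u[i]}^{\op}\cap\cx_{w_0v[h(i)]}=\overline{\pi^{-1}(X_{u[i]}^{\op}\cap X_{w_0v[h(i)]})}$, a matrix Richardson scheme that meets $\GL_n(\C)$; hence the determinant $d$ lies in none of the primes $\cl_{i-1}+\ck_{h(i)}$, and so
\[
\tilde{\ci}_{\semi,h}=\ci_{\semi,h}:d^{\infty}=\bigcap_{i\in\cc(h)}\bigl((\cl_{i-1}+\ck_{h(i)}):d^{\infty}\bigr)=\bigcap_{i\in\cc(h)}(\cl_{i-1}+\ck_{h(i)})=\ci_{\semi,h}.
\]
Therefore $\cy_{\semi,h}=\Spec(\C[\mathbf{z}]/\tilde{\ci}_{\semi,h})$ is the reduced union of the closed subschemes $\Spec(\C[\mathbf{z}]/(\cl_{i-1}+\ck_{h(i)}))=\cx_{u[i]}^{\op}\cap\cx_{w_0v[h(i)]}$ for $i\in\cc(h)$, which is exactly the asserted decomposition into matrix Richardson schemes.
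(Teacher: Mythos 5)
Your proposal is correct, and it reaches the crucial ideal identity $\ci_{\semi,h}=\bigcap_{i\in \cc(h)}(\cl_{i-1}+\ck_{h(i)})$ by a genuinely different route than the paper. The paper proceeds by induction on $|\cc(h)|$: it peels off the last finite corner $m$, applies the inductive hypothesis to the truncated Hessenberg function $h'$, and then massages $\cj\cap\cl_{m-1}+\cl_{m^*}\cap\ck_{h(m)}$ using only containments such as $\ck_{h(m)}\subseteq\cj$ and $\cj\cap\cl_{m-1}\subseteq\cl_{m^*}$ together with the modular law for ideals. You instead exploit the fact that the $\cl$'s and $\ck$'s are extended from the two tensor factors of $\C[\mathbf{z}]=\C[\mathbf{z}^{(1)}]\otimes_\C\C[\mathbf{z}^{(2)}]$, choose complements along the two chains to get a cell decomposition $\bigoplus_{r,s}\cv_{r,s}$ in which every ideal in the sublattice generated by the $\cl_{a_t}$ and $\ck_{b_t}$ is a span of cells, and conclude by distributivity (or, most transparently, by your parenthetical remark that both sides are the span of the cells with $r<s$). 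This is non-inductive, makes the combinatorial content of the identity visible at a glance, and the needed verifications (extended ideals, chain-compatible complements, cellular sums/intersections) are routine linear algebra, so the step you flag as delicate does go through; the paper's induction buys a shorter, more self-contained argument that never leaves the category of ideals and uses only the modular law, which holds for arbitrary ideals. Your relabeling ($a_t=i_t-1$, $b_t=h(i_t)$, $i_t^*=i_{t+1}-1$, $b_m=n$) matches the paper's Lemma~\ref{lem.corners}/Remark~\ref{rem:LiKh(i)} reduction, and your endgame (primality of each $\cl_{i-1}+\ck_{h(i)}$, $d$ lying in none of them, and $\tilde{\ci}_{\semi,h}=\ci_{\semi,h}:d^{\infty}$ commuting with the finite intersection) is essentially the paper's, with the boundary cases $i=1$ and $h(i)=n$ handled even a bit more explicitly than in the paper. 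The only item the paper proves that you omit is the pairwise incomparability of the primes $\cl_{i-1}+\ck_{h(i)}$ (via Lemma~\ref{lemma.inclusions}(2)), which identifies them as the associated primes and hence the irreducible components; that is not needed for the equality and reducedness asserted in the theorem, but it is what licenses the ``components are indexed by corners'' language used elsewhere, so it would be worth one added sentence.
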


\begin{proof} The statement about the structure of the scheme follows from the claim about the ideals by Lemma~\ref{lem.Schubert4}.   Recall the definition of $\cl_i$ and $\ck_j$ in Equation~\ref{eq:LiKi}.
By Remark~\ref{rem:LiKh(i)}, $\ci_{\semi, h, i} = \cl_i \cdot \ck_{h(i)}$. Then
\begin{equation}\label{eqn.sum}
\ci_{\semi, h} = \sum_{i\in \cc(h)}\ci_{\semi, h, i}=\sum_{i \in \cc(h)} \cl_{i^*} \cdot \ck_{h(i)} =\sum_{i\in \cc(h)} \cl_{i^*} \cap \ck_{h(i)}
\end{equation}
where the first equality follows from Lemma~\ref{lem.corners} and the last equality follows from Lemma~\ref{lem.intersection}. We aim to prove the second equality of~\eqref{eqn.intersection} using induction on the size of $\cc(h)$. 

If $h$ has one corner, then $h=(n,n,\dots, n)$. It follows that $1^*=h(1)=n$, and~\eqref{eqn.sum} becomes
$$
\ci_{\semi,h} = \cl_{1^*} \cap \ck_{h(1)} = \cl_{n} \cap \ck_{n} = \langle 0\rangle
$$
since $\ck_n =  \langle 0\rangle$. On the other hand, $\bigcap_{i\in \cc(h)} \left(\cl_{i-1}+\ck_{h(i)}\right)= \cl_0+\ck_n = 0$, since $\ck_n =\cl_0= \langle 0\rangle$. This proves the second equality of~\eqref{eqn.intersection} in this case.

Now suppose the equality holds for any Hessenberg function whose corner set has size $k$, and suppose $|\cc(h)|=k+1$.  Observe that  $\{i\in \cc(h) \mid h(i)<n\}$ is nonempty, since $\cc(h)$ has at least two elements. Let $m=\max\{i\in \cc(h) \mid h(i)<n\}$.  Then $m^*+1$ is the maximum element of $\cc(h)$, and  $h(m^*+1)=n$.

Define $h':[n]\to [n]$ to be the Hessenberg function such that $h'(i)=h(i)$ for all $i<m$ and $h'(i)=n$ for all $i\geq m$. By construction, $\cc(h') = \cc(h) \setminus \{m^*+1\}$,  $m$ is the largest corner of $h'$, and $\ck_{h(i)} = \ck_{h'(i)}$ for all $i<m$. 
Applying~\eqref{eqn.sum} and the inductive assumption to $h'$ and the fact that $\ck_n= \langle 0\rangle$, we obtain
\begin{align*}
\sum_{i\in \cc(h),\, i<m} \cl_{i^*}\cap \ck_{h(i)} &= \sum_{i\in \cc(h')} \cl_{i^*}\cap \ck_{h'(i)}=\ci_{\semi, h'} = \bigcap_{i\in \cc(h')} \left(\cl_{i-1}+ \ck_{h'(i)}\right)\\
& = \left( \bigcap_{i\in \cc(h),i< m}\left(\cl_{i-1}+ \ck_{h(i)}\right) \right) \cap \cl_{m-1}.
\end{align*}
Let $\cj = \bigcap\limits_{i\in \cc(h),\, i<m} \left( \cl_{i-1}+\ck_{h(i)} \right)$ for notational simplicity. 
By the previous calculation,
 \begin{align*}
\ci_{\semi, h} =\left( \sum_{i\in \cc(h), i< m} \cl_{i^*}\cap \ck_{h(i)}\right) + \cl_{m^*}\cap \ck_{h(m)}   = \cj\cap \cl_{m-1} + \cl_{m^*}\cap \ck_{h(m)} 
 \end{align*}
 where the first equality follows from~\eqref{eqn.sum} and $\ck_{h(m^*+1)}=\ck_n=\langle 0 \rangle$. Note that $h(i)\leq h(m)$ for all $i<m$, so that $\ck_{h(m)} \subseteq \cl_{i-1}+\ck_{h(i)}$ and thus $\ck_{h(m)} \subseteq \cj$. Therefore, 
\begin{align*}
\ci_{\semi, h}& =\left( \cj \cap  \cl_{m-1} \right) + (\cl_{m^*}\cap \ck_{h(m)}) \\
&=\left(  \cl_{m^*} \cap \cj \cap  \cl_{m-1} \right) + (\cl_{m^*}\cap \ck_{h(m)}) && \text{ since $\cl_{m-1}\subseteq \cl_{m^*},$ as $m-1<m\leq m^*$}\\
&=\cl_{m^*} \cap \left[ \left( \cj \cap  \cl_{m-1} \right) +\ck_{h(m)} \right] &&  \textup{ 
since $\cj\cap \cl_{m-1}\subseteq \cl_{m^*}$} \\
&=\cl_{m^*} \cap \left[ \left( \cj \cap  \cl_{m-1} \right) + (\cj \cap \ck_{h(m)}) \right] && \textup{ since $\ck_{h(m)} \subseteq \cj$}\\
&= \cj \cap  \left( \cl_{m-1} + \ck_{h(m)} \right)  \cap \cl_{m^*} \\
&=\bigcap_{i\in \cc(h)} \left(\cl_{i-1} +\ck_{h(i)}\right)
\end{align*}
where the last equality follows from the definition of $\cj$ and the fact that $\cl_{m^*} +\ck_{h(m^*+1)} = \cl_{m^*} + \ck_n =  \cl_{m^*}$. This proves the second equality of~\eqref{eqn.intersection}. 

Each ideal $\cl_{i-1}+\ck_{h(i)}$ in~\eqref{eqn.intersection} is prime by Lemma~\ref{lem.Schubert4}, so $\ci_{\semi, h}$ is an intersection of prime ideals and therefore radical.  By Lemma~\ref{lemma.inclusions}, we have $\cl_{i-1}+\ck_{h(i)} \subseteq \cl_{j-1}+\ck_{h(j)}$ if and only if $i\leq j$ and $h(i)\geq h(j)$.  However, for distinct corners $i, j\in \cc(h)$, the inequality $i\leq j$ implies $h(i)<h(j)$. It follows that the primes in the set $\left\{\cl_{i-1}+\ck_{h(i)} \mid i\in \cc(h)\right\}$ are distinct and none is contained in any of the others.  Consequently, $\left\{\cl_{i-1}+\ck_{h(i)} \mid i\in \cc(h)\right\}$ is the set of associated primes of $\ci_{\semi, h}$. 

Recall that $d\in \C[\mathbf{z}]$ denotes the determinant function of the generic $n\times n$ matrix $Z=(z_{ij})_{1\leq i,j \leq n}$.  If $d\in \cl_{i-1}+\ck_{h(i)}$ for some $i$ then $\GL_n(\C) \cap  \Spec(\C[\mathbf{z}]/(\ci_{i-1} + \ck_{h(i)})) = \emptyset$, contradicting Lemma~\ref{lem.Schubert4}.  This proves $d\notin \cl_{i-1}+\ck_{h(i)}$ for all $i$, so $$\widetilde{\cl_{i-1}+\ck_{h(i)}}=\cl_{i-1}+\ck_{h(i)}.$$
Since the localization map is injective, it preserves intersections of ideals, and the previous observation implies $\tilde{\ci}_{\semi, h} =  \ci_{\semi, h}$, proving the first equality of \eqref{eqn.intersection}. 
\end{proof}

\begin{cor}\label{cor.ss.variety} For any Hessenberg function $h:[n]\to [n]$, the minimal semisimple Hessenberg variety $Y_{\semi, h}$ is isomorphic to a reduced union of Richardson varieties,
\[
Y_{\semi, h} = \bigcup_{i\in \cc(h)} \left( X_{u[i]}^{\op} \cap X_{w_0v[h(i)]} \right).
\]
\end{cor}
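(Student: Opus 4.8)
The plan is to deduce this from Theorem~\ref{thm.ss.components} by transporting the scheme-theoretic identity for the matrix Hessenberg scheme $\cy_{\semi,h}\subseteq M_n(\C)$ down to the flag variety along the quotient map $\pi\colon G\to G/B$. The two facts that make this work are, first, that $\pi$ induces a bijection between right $B$-invariant subschemes of $G$ and subschemes of $G/B$ (and this bijection respects finite unions, intersections, and the reduced structure, since Zariski-locally $\pi$ is a projection $U\times B\to U$), and second, that $\cy_{\semi,h}\cap G=\pi^{-1}(Y_{\semi,h})$, which is exactly Remark~\ref{rem.quotient}(2) combined with the description of $\cy_{\semi,h}$ as the closure of $\pi^{-1}(Y_{\semi,h})$.

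First I would intersect the identity
\[
\cy_{\semi,h}=\bigcup_{i\in\cc(h)}\bigl(\cx_{u[i]}^{\op}\cap\cx_{w_0v[h(i)]}\bigr)
\]
of Theorem~\ref{thm.ss.components} with the principal open subset $G=\GL_n(\C)$ of $M_n(\C)$; since restriction to an open subscheme commutes with finite unions and intersections of closed subschemes, this gives
\[
\pi^{-1}(Y_{\semi,h})=\bigcup_{i\in\cc(h)}\Bigl((\cx_{u[i]}^{\op}\cap G)\cap(\cx_{w_0v[h(i)]}\cap G)\Bigr).
\]
Next I would identify each factor on the right as a $\pi$-preimage. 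By Proposition~\ref{prop.Schubert}(2), $\cx_{u[i]}^{\op}\cap G=\pi^{-1}(X_{u[i]}^{\op})$. For the non-opposite factor I would use Equation~\eqref{eq:oppositetoregularSchubert} to write $\cx_{w_0v[h(i)]}=w_0\cdot\cx_{v[h(i)]}^{\op}$, intersect with the $w_0$-stable open set $G$, and apply Proposition~\ref{prop.Schubert}(2) again, using that left translation by $w_0$ on $G$ descends to left translation by $w_0$ on $G/B$ and that $w_0\cdot X_{v[h(i)]}^{\op}=X_{w_0v[h(i)]}$; this yields $\cx_{w_0v[h(i)]}\cap G=\pi^{-1}(X_{w_0v[h(i)]})$. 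Since $\pi^{-1}$ commutes with intersections and (by flatness of $\pi$) with finite unions, the displayed equation becomes $\pi^{-1}(Y_{\semi,h})=\pi^{-1}\bigl(\bigcup_{i\in\cc(h)}(X_{u[i]}^{\op}\cap X_{w_0v[h(i)]})\bigr)$, and the bijection of right $B$-invariant subschemes of $G$ with subschemes of $G/B$ forces the asserted equality inside $G/B$.

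Finally, reducedness descends: $\cy_{\semi,h}$ is reduced by Theorem~\ref{thm.ss.components}, hence so is its open subscheme $\cy_{\semi,h}\cap G=\pi^{-1}(Y_{\semi,h})$, and since $\pi$ is Zariski-locally the projection $U\times B\to U$ with $B$ a variety over the algebraically closed field $\C$, the scheme $Y_{\semi,h}$ is itself reduced. The only real obstacle is bookkeeping: one must check carefully that every operation used — restriction to the open set $G$, the $w_0$-translation identity, and formation of $\pi^{-1}$ — interacts correctly with unions, intersections, and reducedness at the level of schemes, since the assertion is scheme-theoretic and not merely about underlying varieties; none of these points is deep, but they should be made explicit. (The word ``isomorphic'' in the statement simply records Remark~\ref{rem.isom}: for an arbitrary minimal semisimple $\mx$ one has $Y_{\mx,h}\cong Y_{\semi,h}$, so the same decomposition holds for $Y_{\mx,h}$ up to isomorphism.)
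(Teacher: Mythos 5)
Your argument is correct and follows the paper's intended route: the paper states Corollary~\ref{cor.ss.variety} without separate proof, treating it as immediate from Theorem~\ref{thm.ss.components} via the correspondence between right $B$-invariant subschemes of $G$ and subschemes of $G/B$ set up in Remark~\ref{rem.quotient}, Proposition~\ref{prop.Schubert}(2), and Lemma~\ref{lem.Schubert4}. You simply make explicit the descent bookkeeping (restriction to $G$, the $w_0$-translation, flat pullback commuting with unions/intersections, and reducedness descending along the locally trivial $B$-fibration), which is exactly the implicit content of the paper's deduction.
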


Theorem~\ref{thm.ss.components} shows that the structure of the semisimple Hessenberg variety $Y_{\semi, h}$ can be read directly from the data of the Hessenberg function.  We recover the results of Example~\ref{ex.2444.3}; another example appears below. 

\begin{example} \label{ex.semisimple} Let $\semi= \mathrm{diag}(1, 0,0,0)$.  First, we consider  $h=(1,2,3,4)$.  The corner set of $h$ is $\cc(h) = \{1,2,3,4\}$ and the corresponding diagram is the following.
\begin{center}
\begin{tikzpicture}[scale=.5]
\draw (0,0) grid (4,4);
\draw[fill=pink, draw=none] (0,3) rectangle (1,4);
\draw[fill=pink, draw=none] (1,2) rectangle (2,3);
\draw[fill=pink, draw=none] (2,1) rectangle (3,2);
\draw[fill=pink, draw=none] (3,0) rectangle (4,1);
\draw[ultra thick,blue] 
(0,4) -- (0,3) -- (1,3) -- (1,2) -- (2,2) -- (2,1) -- (3,1)  -- (3,0) -- (4,0);
\end{tikzpicture}
\end{center}
In this case, the variety $Y_{\semi, (1,2,3,4)} = \pi(GL_4(\C) \cap \cy_{\semi, (1,2,3,4)})$ is the Grothedieck--Springer fiber over $\semi$.  We have
\[
u[1]=e,\; u[2]=[2134],\; u[3] = [2314],\; u[4] = [2341],
\]
and
\[
\; v[1] = [4123],\; v[2] = [1423],\; v[3] = [1243],\; v[4]=e.
\]
Thus, by Theorem~\ref{thm.ss.components}, $\cy_{\semi, (1,2,3,4)}$ is the union of four irreducible components
\begin{eqnarray*}
\cy_{\semi, (1,2,3,4)} &=& \cx_{w_0 v[1]} \cup \left( \cx_{u[2]}^{\op} \cap \cx_{w_0v[2]} \right) \cup \left( \cx_{u[3]}^{\op} \cap \cx_{w_0v[3]} \right) \cup \cx_{u[4]}^{\op} \\
&=& \cx_{[1432]} \cup \left( \cx_{[2134]}^{\op} \cap \cx_{[4132]} \right) \cup \left( \cx_{[2314]}^{\op} \cap \cx_{[4312]} \right) \cup \cx_{[2341]}^{\op}, 
\end{eqnarray*}
each of which is a matrix Richardson variety.

Next, consider $h=(2,2,4,4)$.   The corner set of $h$ is $\cc(h) = \{1,3\}$ and the corresponding diagram is the following.
\begin{center}
\begin{tikzpicture}[scale=.5]
\draw (0,0) grid (4,4);
\draw[fill=pink, draw=none] (0,2) rectangle (1,3);
\draw[fill=pink, draw=none] (2,0) rectangle (3,1);
\draw[ultra thick,blue] 
(0,4) -- (0,2) -- (2,2) -- (2,0)  -- (4,0);
\end{tikzpicture}
\end{center}
In this case Theorem~\ref{thm.ss.components} tells us $\cy_{\semi, (2,2,4,4)}$ is the union of two irreducible components,
\[
\cy_{\semi, (2,2,4,4)} = \cx_{w_0 v[2]}  \cup \cx_{u[3]}^{\op} =  \cx_{[4132]} \cup  \cx_{[2314]}^{\op}.
\]
\end{example}

\

\begin{cor}\label{cor.dimension} Let $h:[n]\to [n]$ be a Hessenberg function. The dimension of the minimal semisimple matrix Hessenberg scheme $\cy_{\semi, h}$ is 
\[
\dim \cy_{\semi, h} = \frac{n(n+1)}{2} + \frac{(n-1)(n-2)}{2} + \max_{i\in \cc(h)} \{ h(i)-i \}.
\]
\end{cor}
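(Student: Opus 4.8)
The plan is to compute the dimension of $\cy_{\semi, h}$ directly from the decomposition established in Theorem~\ref{thm.ss.components}. Since $\cy_{\semi, h} = \bigcup_{i\in \cc(h)} \left(\cx_{u[i]}^{\op} \cap \cx_{w_0 v[h(i)]}\right)$, its dimension is the maximum of the dimensions of the matrix Richardson schemes appearing in the union. First I would record the dimension of each piece. By Lemma~\ref{lem.Schubert}, $\cx_{u[i]}^{\op} = \Spec(\C[\mathbf{z}]/\cl_{i-1})$ and $\cx_{w_0 v[h(i)]} = \Spec(\C[\mathbf{z}]/\ck_{h(i)})$, and by Lemma~\ref{lem.Schubert4} these intersect in the reduced scheme cut out by $\cl_{i-1} + \ck_{h(i)}$. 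Because the generators of $\cl_{i-1}$ involve only the variables $z_{11}, \ldots, z_{1,i-1}$ while the generators of $\ck_{h(i)}$ involve only the variables $z_{k\ell}$ with $k\geq 2$, the coordinate ring factors as a tensor product, so
\[
\dim\left(\cx_{u[i]}^{\op}\cap \cx_{w_0v[h(i)]}\right) = \dim \cx_{u[i]}^{\op} + \dim \cx_{w_0 v[h(i)]} - n^2.
\]
Alternatively, this additivity follows from Proposition~\ref{prop.Schubert}(3) together with the fact that $\ell(u[i]) + \ell(w_0 v[h(i)])$ computes the codimension, since $\cl_{i-1}+\ck_{h(i)}$ is a prime ideal of the expected height.

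The next step is to evaluate the two lengths. From the explicit one-line notation, $u[i] = [2,3,\ldots,i,1,i+1,\ldots,n]$ has length $\ell(u[i]) = i-1$, since the entry $1$ in position $i$ forms exactly $i-1$ inversions and nothing else does. Likewise $v[h(i)] = [1,2,\ldots,h(i)-1,n,h(i),\ldots,n-1]$ has length $\ell(v[h(i)]) = n - h(i)$, so $\ell(w_0 v[h(i)]) = \binom{n}{2} - (n-h(i))$. Therefore, using Proposition~\ref{prop.Schubert}(3),
\[
\dim \cx_{u[i]}^{\op} = n^2 - (i-1), \qquad \dim \cx_{w_0 v[h(i)]} = n^2 - \binom{n}{2} + (n - h(i)),
\]
and substituting into the additivity formula above gives
\[
\dim\left(\cx_{u[i]}^{\op}\cap \cx_{w_0v[h(i)]}\right) = n^2 - \binom{n}{2} + (n - h(i)) - (i-1) = \frac{n(n+1)}{2} - \bigl(h(i) + i - 1\bigr).
\]
Finally, taking the maximum over $i\in \cc(h)$ turns the term $-(h(i)+i-1)$ into $\max_{i\in \cc(h)}\{h(i) - i\}$ up to rewriting: one checks that $\frac{n(n+1)}{2} - (h(i)+i-1) = \frac{n(n+1)}{2} + \frac{(n-1)(n-2)}{2} + (h(i) - i)$ fails to match directly, so a bit of care is needed — in fact one should minimize $h(i)+i$ over corners, and the stated formula should be cross-checked against Example~\ref{ex.semisimple} (e.g.~$h=(2,2,4,4)$, where the top component is $\cx_{[2314]}^{\op}$ of dimension $n^2 - 2 = 14$) to pin down the correct normalization of the constant.

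The main obstacle I anticipate is purely bookkeeping: getting the additive constant right and confirming that the extremal corner is the one maximizing $h(i) - i$ rather than, say, minimizing $h(i) + i$; these differ because the pieces are not equidimensional (as noted in the remark following Theorem~\ref{thm1}). Once the length computations $\ell(u[i]) = i-1$ and $\ell(v[h(i)]) = n - h(i)$ are verified from the one-line notation, and the tensor-product splitting is invoked to justify additivity of dimension, the result follows by taking the maximum over $\cc(h)$ and simplifying the resulting constant.
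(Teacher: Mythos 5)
There is a genuine error in your dimension bookkeeping, and as written the argument does not establish the corollary (you even end by conceding your formula does not match and by suggesting the extremal corner minimizes $h(i)+i$, which contradicts the statement). The mistake is in $\dim \cx_{w_0 v[h(i)]}$: Proposition~\ref{prop.Schubert}(3) computes the codimension of an \emph{opposite} matrix Schubert variety as the length of its indexing permutation, so it applies to $\cx^{\op}_{v[h(i)]}$, and via $w_0\cx^{\op}_{v[h(i)]}=\cx_{w_0v[h(i)]}$ one gets $\dim \cx_{w_0 v[h(i)]} = n^2 - \ell(v[h(i)]) = n^2-(n-h(i))$, \emph{not} $n^2-\ell(w_0v[h(i)])$. (Equivalently: $\ck_{h(i)}$ imposes the rank-$(h(i)-1)$ condition on an $(n-1)\times h(i)$ submatrix, which has codimension $n-h(i)$.) The same confusion infects your "alternative" justification, where the codimension of the component is $\ell(u[i])+\ell(v[h(i)])=(i-1)+(n-h(i))$, not $\ell(u[i])+\ell(w_0v[h(i)])$; there is also an arithmetic slip (a dropped $n$) in your simplification. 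With the correct value, your tensor-product argument — which is sound, since $\cl_{i-1}$ and $\ck_{h(i)}$ involve disjoint sets of variables — gives
\begin{equation*}
\dim\left(\cx_{u[i]}^{\op}\cap \cx_{w_0v[h(i)]}\right) = \bigl(n^2-(i-1)\bigr)+\bigl(n^2-(n-h(i))\bigr)-n^2 = n^2-n+1+(h(i)-i) = \frac{n(n+1)}{2}+\frac{(n-1)(n-2)}{2}+(h(i)-i),
\end{equation*}
and taking the maximum over $i\in\cc(h)$ yields exactly the stated formula, with the extremal corners being those maximizing $h(i)-i$. Your own sanity check confirms this: for $h=(2,2,4,4)$, $n=4$, both components have dimension $14 = n^2-n+1+1$, whereas your intermediate formula predicts at most $8$.

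For comparison, the paper argues downstairs-then-up: it computes the dimension of the Richardson variety in $G/B$ as $\ell(w_0v[h(i)])-\ell(u[i]) = \frac{(n-1)(n-2)}{2}+(h(i)-i)$ and then adds $\dim B = \frac{n(n+1)}{2}$, since each component of $\cy_{\semi,h}$ is the closure of the $B$-bundle $\pi^{-1}$ over the corresponding Richardson variety. Your route of computing directly in $M_n(\C)$ from the ideals $\cl_{i-1}+\ck_{h(i)}$ is a legitimate alternative, but it requires the corrected codimensions above; once those are fixed, the proof goes through without any appeal to examples to "pin down the constant."
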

\begin{proof} By Proposition~\ref{prop.Schubert}(3), the dimension of each Richardson variety $X_{u[i]}^{\op} \cap X_{w_0v[h(i)]}$ is
\begin{eqnarray*}
\ell(w_0v[h(i)]) - \ell(u[i]) &=& \left(\frac{n(n-1)}{2}-(n-h(i))\right) -(i-1)\\
&=&   \frac{(n-1)(n-2)}{2} + (h(i)-i).
\end{eqnarray*}
Since 
\begin{eqnarray}\label{eqn.dim.component}
\dim \cx_{u[i]}^{\op} \cap \cx_{w_0v[h(i)]} &=&  \dim B + \dim \left( X_{u[i]}^{\op} \cap X_{w_0v[h(i)]}\right)\\
\nonumber&=& \frac{n(n+1)}{2} + \frac{(n-1)(n-2)}{2} + (h(i)-i)
\end{eqnarray} 
the result now follows immediately from Theorem~\ref{thm.ss.components}. 
\end{proof}

It is now straightforward to determine when the minimal semisimple Hessenberg scheme is equidimensional.

\begin{cor}\label{cor.equidim}  Let $h:[n]\to [n]$ be a Hessenberg function. The minimal semisimple matrix Hessenberg scheme $\cy_{\semi, h}$ is equidimensional if and only if all of the corners of the associated diagram of $h$ lie on the same subdiagonal.  In other words, $\cy_{\semi, h}$ is equidimensional if and only if $h(i)-i=h(j)-j$ for all distinct corners $i$ and $j$ of $h$. 
\end{cor}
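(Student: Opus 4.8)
The plan is to deduce the statement directly from Theorem~\ref{thm.ss.components} together with the dimension computation in Corollary~\ref{cor.dimension}. First I would recall from the proof of Theorem~\ref{thm.ss.components} that $\ci_{\semi,h}$ is radical with associated primes exactly the ideals $\cl_{i-1}+\ck_{h(i)}$ for $i\in\cc(h)$, and that these primes are pairwise distinct with none contained in another. Hence they are precisely the minimal primes of $\ci_{\semi,h}$, so the irreducible components of $\cy_{\semi,h}$ are exactly the matrix Richardson varieties $\cx_{u[i]}^{\op}\cap\cx_{w_0v[h(i)]}$ for $i\in\cc(h)$, with no redundancies.

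Next I would invoke Equation~\eqref{eqn.dim.component} from the proof of Corollary~\ref{cor.dimension}, which gives
\[
\dim\left(\cx_{u[i]}^{\op}\cap\cx_{w_0v[h(i)]}\right)=\frac{n(n+1)}{2}+\frac{(n-1)(n-2)}{2}+(h(i)-i).
\]
Since a scheme is equidimensional precisely when all of its irreducible components have the same dimension, $\cy_{\semi,h}$ is equidimensional if and only if the quantity $h(i)-i$ is the same for every corner $i\in\cc(h)$, i.e.\ $h(i)-i=h(j)-j$ for all distinct corners $i,j$ of $h$. This already proves the second (displayed) characterization in the statement.

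Finally, to obtain the first (geometric) characterization, I would observe that for a corner $i\in\cc(h)$ the corner box $(h(i),i)$ in the diagram of $h$ lies on the subdiagonal indexed by $h(i)-i$ (the boxes $(a,b)$ with fixed $a-b$ form a subdiagonal). Thus ``all corners of the diagram of $h$ lie on the same subdiagonal'' is literally the condition ``$h(i)-i$ is constant over $i\in\cc(h)$,'' completing the equivalence. I do not anticipate a genuine obstacle here; the only point requiring care is the appeal to the proof of Theorem~\ref{thm.ss.components} to be sure the listed Richardson varieties really are the irreducible components (no containments among them), which is exactly what the associated-primes analysis there establishes.
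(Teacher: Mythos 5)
Your proposal is correct and follows essentially the same route as the paper: both rest on the dimension formula~\eqref{eqn.dim.component} from the proof of Corollary~\ref{cor.dimension} applied to the components indexed by $\cc(h)$, together with the identification of subdiagonals as the loci $a-b$ constant. Your extra care in citing the associated-primes analysis from Theorem~\ref{thm.ss.components} to rule out containments among the Richardson components is a point the paper leaves implicit, but it is already established there, so the arguments coincide in substance.
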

\begin{proof} Given distinct corners $i$ and $j$ of the Hessenberg function $h$, the formula~\eqref{eqn.dim.component} from the proof of Corollary~\ref{cor.dimension} implies that the corresponding irreducible components of $\cy_{\semi, h}$ have the same dimension if and only if $h(i)-i = h(j)-j$.  Since the subdiagonal containing the corner box $(h(i),i)$ is the set of all $(a,b)$ such that $a-b = h(i)-i$, the result is proved.
\end{proof} 

Notice that minimal semisimple Hessenberg schemes appearing in Example~\ref{ex.semisimple} are equidimensional, which is obvious from Corollary~\ref{cor.equidim} by looking at the diagrams for the corresponding Hessenberg functions.  Similarly, Example~\ref{ex.2444.3} considers a case that is not equidimensional, which is again obvious in light of Corollary~\ref{cor.equidim} since the two corners appear in different subdiagonals of the diagram for $h$.

Next, we compute the cohomology class of the minimal semisimple Hessenberg variety $Y_{\semi,h}$.  Recall Borel's description of the integral cohomology ring $H^*(GL_n(\C)/B,\Z)$ as the ring of coinvariants, that is, 
\[
H^*(GL_n/B,\Z) \simeq \Z[ x_1,\dots, x_{n}]/I, 
\]
where $I \subset \Z[ x_1,\dots, x_{n}]$ is the ideal generated by the symmetric polynomials without a constant term.  The Schubert polynomial $\mathfrak{S}_w\in \Z[ x_1,\dots, x_{n}]$ is a polynomial representative for the class $[X_w^{\op}]$ of the opposite Schubert variety $X_w^{\op}$. It is well-known that the product of Schubert polynomials $\mathfrak{S}_u \cdot \mathfrak{S}_v$ is a polynomial representative for the cohomology class $[X_u^{\op}\cap X_{w_0v}]$ of the Richardson variety $X_u^{\op}\cap X_{w_0v}$. For a more detailed definition of Schubert polynomials see~\cite{Manivel}.

With this notation in place, Corollary~\ref{cor.ss.variety} allows us to compute a representative for $[Y_{\semi, h}]$ as a sum of Schubert polynomials.

\begin{cor} \label{cor.cohomclass} Suppose $n\geq 3$, and let $h$ be a Hessenberg function and $d_h= \max \{h(i)-i \mid i\in [n]\}$. The cohomology class of the Hessenberg variety $Y_{\semi, h}$ is 
$$
[Y_{\semi, h}]= 
\begin{cases}
\displaystyle{\ \sum_{i=1}^{n-1}\ 2\ \mathfrak{S}_{w[i+1,i]}} & \mbox{if $h=(1,2,\dots, n)$}\\
\\
 \displaystyle{\sum_{\substack{i\in \cc(h)\\ d_h = h(i)-i}} \mathfrak{S}_{w[ i, h(i)]} }& \mbox{if $h\neq (1,2,\dots, n)$.}
\end{cases}
$$
where $w[i,j]$ is the shortest permutation $w$ in $S_n$ with $w(i)=1$ and $w(j) = n$,
\end{cor}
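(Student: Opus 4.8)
The plan is to use Corollary~\ref{cor.ss.variety}, which expresses $Y_{\semi,h}$ as a union of Richardson varieties $X_{u[i]}^{\op}\cap X_{w_0v[h(i)]}$ indexed by $i\in\cc(h)$, together with the fact that the cohomology class of a reduced union of subvarieties is the sum of the classes of its top-dimensional components, each counted once. By Corollary~\ref{cor.dimension} and formula~\eqref{eqn.dim.component}, the component indexed by $i$ has dimension controlled by $h(i)-i$, so the top-dimensional components are exactly those with $h(i)-i = d_h = \max_{k\in[n]}\{h(k)-k\}$. The first step is therefore to observe that $[Y_{\semi,h}] = \sum [X_{u[i]}^{\op}\cap X_{w_0v[h(i)]}]$, where the sum ranges over $i\in\cc(h)$ achieving the maximum $d_h$. (One must check the general fact that the fundamental class of a reduced, possibly non-equidimensional union is the sum of classes of its components of maximal dimension; lower-dimensional components contribute $0$ in the relevant degree of $H^*(G/B)$.)

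The second step is to identify each summand $[X_{u[i]}^{\op}\cap X_{w_0v[h(i)]}]$ with a Schubert polynomial. The discussion preceding the corollary states that $\mathfrak S_u\cdot \mathfrak S_v$ represents $[X_u^{\op}\cap X_{w_0v}]$. So I would show $\mathfrak S_{u[i]}\cdot \mathfrak S_{v[h(i)]} \equiv \mathfrak S_{w[i,h(i)]}$ in the coinvariant ring $H^*(G/B)$. The key point is that $u[i]$ and $v[j]$ (for $i<j$) have no common inversions in a suitable sense: $u[i]$ moves $1$ into position $i$ and is otherwise increasing, $v[j]$ moves $n$ into position $j$ and is otherwise increasing, and $w[i,j]$ (for $i\ne j$) is the permutation with $1$ in position $i$, $n$ in position $j$, and the remaining values $2,\dots,n-1$ increasing in the remaining positions. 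One checks $\ell(w[i,j]) = \ell(u[i]) + \ell(v[j])$ and that $w[i,j] = u[i]\cdot(\text{shift of }v[j])$ is a reduced product, or equivalently that the $0/1$-matrix (permutation) $w[i,j]$ is obtained by "overlaying" $u[i]$ and $v[j]$; then the product of Schubert polynomials is $\mathfrak S_{w[i,j]}$ by the Monk/transition-free multiplicativity in the "disjoint support" case (this is the statement that $\mathfrak S_u \mathfrak S_v = \mathfrak S_{uv}$ when $\ell(uv)=\ell(u)+\ell(v)$ and $u,v$ lie in complementary parabolic-type pieces). Alternatively, Lemma~\ref{lem.Schubert4} already gives $\cx_{u[i+1]}^{\op}\cap\cx_{w_0v[j]} = \Spec(\C[\mathbf z]/(\cl_i+\ck_j))$ as a variety cut out by linear forms $z_{11},\dots,z_{1i}$ and the minors generating $\ck_j$; passing to cohomology, linear forms contribute trivially and the class of the determinantal locus $\ck_j$ is the Schubert class for $v[j]$, so the product collapses to a single Schubert polynomial.

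The third step handles the exceptional case $h=(1,2,\dots,n)$, where every $i\in[n]$ is a corner and $h(i)-i=0$ for all $i$, so $d_h=0$ and \emph{all} $n$ components are top-dimensional. Here for $i=1$ we get $u[1]=v[1]\cdots$ wait — $u[1]=e$ and the component is $\cx_{w_0v[1]}$, i.e.\ $X_{w_0v[1]}$ whose class is $\mathfrak S_{v[1]}$; for $i=n$ the component is $\cx_{u[n]}^{\op}$ with class $\mathfrak S_{u[n]}$. For the coincidence producing the factor of $2$, I would observe that when $h(i)-i=0$ the permutation $w[i,h(i)]=w[i,i]$ is \emph{not} well-defined by the "shortest with $w(i)=1, w(i)=n$" recipe (one can't have $w(i)=1$ and $w(i)=n$), so the second formula doesn't literally apply; instead one checks directly that for $i\in\{1,\dots,n-1\}$ the component indexed by $i$ has class $\mathfrak S_{u[i]}\mathfrak S_{v[i]}$, which equals $\mathfrak S_{u[i+1]}\mathfrak S_{v[i+1]}$ (both represent $[X^{\op}_{u[i+1]}\cap X_{w_0v[i+1]}]$ after the index shift, using $\ell(u[i])+\ell(v[i]) = \ell(u[i+1])+\ell(v[i+1])$ — each equals $(i-1)+(n-i) = n-1$), and that this common value is $\mathfrak S_{w[i+1,i]}$ (here $w[i+1,i]$ makes sense: $1$ in position $i+1$, $n$ in position $i$, which is the simple transposition-type permutation of length $n-1$... one verifies its length is indeed $n-1$). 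Thus the $n$ components pair up into $n-1$ pairs each contributing the same Schubert polynomial $\mathfrak S_{w[i+1,i]}$, giving the factor $2$, with the hypothesis $n\geq 3$ ensuring the pairing is as stated and no degeneracies occur at the ends.

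The main obstacle I expect is the second step: cleanly proving $\mathfrak S_{u[i]}\cdot\mathfrak S_{v[h(i)]} = \mathfrak S_{w[i,h(i)]}$ in the coinvariant ring. This requires knowing a "disjoint supports" multiplication rule for Schubert polynomials, or else a direct geometric argument that the matrix Richardson variety $\cx_{u[i]}^{\op}\cap\cx_{w_0v[h(i)]}$, being cut out by the variables $z_{11},\dots,z_{1,i-1}$ together with the size-$h(i)$ minors of the lower submatrix, has cohomology class equal to the single determinantal Schubert class $\mathfrak S_{w[i,h(i)]}$ — essentially because the linear equations "don't see" cohomology and the Schubert class of $\ck_{h(i)}$ alone is $\mathfrak S_{v[h(i)]}$, while re-indexing the ambient structure to account for the first row being constrained produces $\mathfrak S_{w[i,h(i)]}$ rather than $\mathfrak S_{v[h(i)]}$. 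Getting the bookkeeping of this re-indexing exactly right — and confirming it is compatible with the $w[i,j]$ recipe in the statement — is where the care is needed.
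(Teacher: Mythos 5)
Your overall skeleton (take the top-dimensional Richardson components, represent each class by $\mathfrak{S}_{u[i]}\cdot\mathfrak{S}_{v[h(i)]}$, and simplify) is the paper's approach, and your treatment of the case $h\neq(1,2,\dots,n)$ is essentially the paper's: since $i<h(i)$, the permutations $u[i]$ and $v[h(i)]$ move disjoint sets of positions, and the product is the single class $\mathfrak{S}_{w[i,h(i)]}$. (Be careful with your stated justification, though: length-additivity $\ell(uv)=\ell(u)+\ell(v)$ alone does \emph{not} give $\mathfrak{S}_u\mathfrak{S}_v=\mathfrak{S}_{uv}$; e.g.\ $\mathfrak{S}_{s_1}\mathfrak{S}_{s_2}=x_1^2+x_1x_2\neq \mathfrak{S}_{s_1s_2}=x_1x_2$. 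What saves the claim is precisely the disjointness of supports, not the length count.)

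The genuine gap is in your third step, the case $h=(1,2,\dots,n)$, which is exactly where the factor of $2$ must be produced. Here $u[i]$ and $v[i]$ do \emph{not} have disjoint support (both move position $i$), and your two claims fail: $\mathfrak{S}_{u[i]}\mathfrak{S}_{v[i]}$ is not a single Schubert polynomial, and it is not equal to $\mathfrak{S}_{u[i+1]}\mathfrak{S}_{v[i+1]}$. Concretely, for $n=3$ and $i=2$ one has $u[2]=s_1$, $v[2]=s_2$, and
\[
\mathfrak{S}_{s_1}\mathfrak{S}_{s_2}=x_1(x_1+x_2)=x_1^2+x_1x_2=\mathfrak{S}_{[3,1,2]}+\mathfrak{S}_{[2,3,1]}=\mathfrak{S}_{w[2,1]}+\mathfrak{S}_{w[3,2]},
\]
whereas $\mathfrak{S}_{u[1]}\mathfrak{S}_{v[1]}=\mathfrak{S}_{v[1]}=x_1^2$; so consecutive components have distinct classes and the middle class is a sum of two Schubert classes, not one. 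Consequently your ``pair up into $n-1$ pairs with equal classes'' mechanism does not hold (and would in any case double-count the overlapping components). The correct mechanism, which is what the paper uses, is the Pieri/Monk rule: for $1<i<n$, $\mathfrak{S}_{u[i]}\mathfrak{S}_{v[i]}=\mathfrak{S}_{w[i,i-1]}+\mathfrak{S}_{w[i+1,i]}$, while the end components contribute $\mathfrak{S}_{v[1]}=\mathfrak{S}_{w[2,1]}$ and $\mathfrak{S}_{u[n]}=\mathfrak{S}_{w[n,n-1]}$; summing over $i$ the terms telescope, each class $\mathfrak{S}_{w[i+1,i]}$ appearing once from the component indexed by $i$ and once from the one indexed by $i+1$, which yields $\sum_{i=1}^{n-1}2\,\mathfrak{S}_{w[i+1,i]}$. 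Your proposal needs this Pieri-rule computation (or an equivalent one) to close the argument; as written, the key identity it relies on is false.
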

\begin{proof} 
By Corollary~\ref{cor.ss.variety}, 
$$
Y_{\semi, h} = \bigcup_{i\in \cc(h)}  \left(X^{\op}_{u[i]} \cap X_{w_0v[h(i)]} \right).
$$
The largest dimensional terms occur when $h(i)-i=d_h$. The cohomology class is therefore given by 
\begin{align*}
[Y_{\semi, h}]&= \displaystyle{\sum_{\substack{i\in \cc(h)\\ d_h = h(i)-i}} \mathfrak{S}_{u[i]}\cdot \mathfrak{S}_{v[h(i)]}}.
\end{align*}
Consider first the case that $h\neq (1, 2,\dots, n)$. Then $d_h>0$ and thus $i<h(i)$ for each $i$ in the sum. Since $i<h(i)$, $u[i]$ and $v[h(i)]$ are in commuting subgroups of the permutation group, implying $\mathfrak{S}_{u[i]}\mathfrak{S}_{v[h(i)]} = \mathfrak{S}_{u[i]v[h(i)]}=\mathfrak{S}_{w[i,h(i)]}.$ The second equality follows.  

When $h=(1, 2, \dots, n)$, observe that $\cc(h)=[n]$ and $d_h=0$ is achieved for all $i\in [n]$. Since $u[1]=v[n]=e$, the expression simplifies to
\begin{align*}\label{eq:Piericase}
[Y_{\semi, h}]&= \displaystyle{\sum_{i=1}^{n} \mathfrak{S}_{u[i]}\cdot \mathfrak{S}_{v[i]}} =  \mathfrak{S}_{v[1]}+ \left( \displaystyle{\sum_{i=2}^{n-1} \mathfrak{S}_{u[i]}\cdot \mathfrak{S}_{v[i]}}\right) + \mathfrak{S}_{u[n]}.
\end{align*}
The terms in the middle sum can be evaluated using the fact that $u[i]=s_1s_2\cdots s_{i-1}$ (here $s_i$ denotes the simple transposition swaping $i$ and $i+1$) and the Pieri rule, which dictates that 
$$
\mathfrak{S}_{u[i]}\cdot \mathfrak{S}_{v[i]}= \mathfrak{S}_{w[i, i-1]}+ \mathfrak{S}_{w[i+1, i]}, \quad \mbox{for }1<i<n.
$$

Since $v[1]=w[2,1]$ and $u[n]=w[n,n-1]$, we obtain 
\begin{align*}
[Y_{\semi, h}]&= \mathfrak{S}_{w[2,1]}+\left(\displaystyle{\sum_{i=2}^{n-1} 
\left(\mathfrak{S}_{w[i, i-1]}+ \mathfrak{S}_{w[i+1, i]}\right)
} \right) + \mathfrak{S}_{w[n,n-1]} = \sum_{i=1}^{n-1} \ 2\ \mathfrak{S}_{w[i+1, i]}.
\end{align*}
This concludes the proof.
\end{proof}
We show in \cite{GP-families} that $\cy_{\semi,h}$ admits a flat degeneration to the minimal nilpotent matrix Hessenberg scheme $\cy_{\nilp, h}$. Since the underlying variety of $\cy_{\nilp, h}$ is a union of matrix Schubert varieties, the factor of 2 occurring when $h=(1,2,\dots, n)$ implies that $\cy_{\nilp, h}$ is not a reduced scheme.

\end{document}